\documentclass[12pt]{amsart}

\usepackage{ucs}

\usepackage{amssymb}
\usepackage{amsthm}
\usepackage{amsmath}
\usepackage{latexsym}
\usepackage[cp1251]{inputenc}
\usepackage{graphicx}
\usepackage{wrapfig}
\usepackage{caption}
\usepackage{subcaption}
\usepackage{indentfirst}
\usepackage[left=2.5cm,right=2.5cm,top=2.5cm,bottom=2.5cm,bindingoffset=0cm]{geometry}
\usepackage{enumerate}
\usepackage{makecell}
\usepackage{float}
\usepackage[normalem]{ulem}
\usepackage{color}

\DeclareMathOperator{\aut}{Aut}

\DeclareMathOperator{\GaL}{{\rm \Gamma}L}
\DeclareMathOperator{\PGaL}{{\rm P\Gamma}L}

\DeclareMathOperator{\GL}{GL}

\DeclareMathOperator{\iso}{Iso}

\DeclareMathOperator{\rk}{rk}

\DeclareMathOperator{\SL}{SL}

\DeclareMathOperator{\sym}{Sym}

\DeclareMathOperator{\poly}{poly}
\DeclareMathOperator{\alg}{Alg}

\DeclareMathOperator{\ind}{ind}

\DeclareMathOperator{\Hol}{Hol}

\DeclareMathOperator{\Char}{char}
\DeclareMathOperator{\Oo}{O}

\def\tm#1{\item[{\rm (#1)}]}

\makeatletter 
\def\@seccntformat#1{\csname the#1\endcsname. } 
\def\@biblabel#1{#1.}

\makeatother

\title{On separability of Tatra association schemes}

\author{Grigory Ryabov}
\address{Sobolev Institute of Mathematics, Novosibirsk, Russia}
\email{gric2ryabov@gmail.com}

\thanks{The author was supported by the state contract of the Sobolev Institute of Mathematics (project number FWNF-2026-0011)}

\date{}

\newtheorem{prop}{Proposition}[section]

\newtheorem{lemm}[prop]{Lemma}
\newtheorem{theo}[prop]{Theorem}

\newtheorem*{corl1}{Corollary}
\theoremstyle{definition}

\newtheorem*{rem}{Remark}

\begin{document}

\begin{abstract}
A Tatra association scheme is an association scheme arising from a symmetric bilinear form defined on the equivalence classes of nonzero $2$-dimensional vectors modulo some subgroup of the multiplicative group of a finite field. In the present paper, we prove that every such association scheme is $2$-separable, i.e. it is determined up to isomorphism by the tensor of its $2$-dimensional intersection numbers.
\\
\\
\textbf{Keywords}: association schemes, separability, linear groups.
\\
\\
\textbf{MSC}: 05E30, 20B25.
\end{abstract}

\maketitle

\section{Introduction}
A \emph{coherent configuration} $\mathcal{X}$ on a finite set $\Omega$ can be thought of as a special partition of $\Omega \times \Omega$ for which the diagonal of $\Omega \times \Omega$ is a union of some classes (see~\cite{CP} for the exact definition). The number $|\Omega|$ is called the \emph{degree} of $\mathcal{X}$. If the diagonal is exactly a class of the partition, then $\mathcal{X}$ is called an \emph{association scheme} or just a \emph{scheme}. To date, coherent configurations are realized as one of the main objects in algebraic combinatorics, used for studying permutation groups, combinatorial designs, codes, isomorphisms of combinatorial objects. For a background of coherent configurations and recent progress in their study, we refer the reader to the monograph~\cite{CP}.

In the present paper, we are interested in a special class of schemes, namely, so-called \emph{Tatra schemes}. These schemes are constructed from a symmetric bilinear form defined on the equivalence classes of nonzero $2$-dimensional vectors modulo some subgroup of the multiplicative group of a finite field (see Section~3 for the exact definition). In fact, Tatra schemes generalize the schemes arising from the actions of $\SL(2,q)$ on pairs of nonzero elements of $\mathbb{F}_q^2$ and $\PGaL(2,q)$ on pairs of one-dimensional subspaces of $\mathbb{F}_q^2$. Every nonthin relation of a Tatra scheme is an edge set of a distance-regular antipodal graph of diameter~3 from~\cite[Theorem~12.5.3]{BCN}. A Tatra scheme depends on two parameters, namely, the order $q$ of a finite field and a divisor $n$ of $q-1$ such that $q(q-1)/n$ is even. The degree of a Tatra scheme is equal to $n(q+1)$.

Tatra schemes were introduced in~\cite{Reich} and go back to the questions from~\cite{Nevo} related to investigations of certain simplicial complexes. In~\cite{KhS}, such schemes were constructed independently from balanced generalized weighing matrices. Fusions and isomorphisms of Tatra schemes were studied in~\cite{MR}. In the same paper, Tatra schemes were used for constructing strongly regular digraphs and divisible design graphs. Noncommutative schemes of rank~6 from~\cite{DF} are fusions of Tatra schemes.

The main goal of the present paper is to give a combinatorial characterization of Tatra schemes. To do this, we use the notion of \emph{separability} of a coherent configuration. Informally, a coherent configuration is said to be \emph{$m$-separable} if it is determined up to isomorphism by the tensor of its $m$-dimensional intersection numbers. An isomorphism between an $m$-separable coherent configuration on $n$ points and any other coherent configuration can be verified in time $\poly(n^m)$. The \emph{separability number} $s(\mathcal{X})$ of $\mathcal{X}$ is defined to be the smallest positive integer $m$ for which $\mathcal{X}$ is $m$-separable. The concept of $m$-separability was introduced in~\cite{EP0}. Several results on $m$-separability of schemes can be found in~\cite{CHPV,CP0,EP0,EP,HKP,MP,PR,PV}. 

The main result of this paper is the theorem below on separability of Tatra schemes.

\begin{theo}\label{main}
Let $\mathcal{X}=\mathcal{X}(q,n)$ be a Tatra association scheme. Then $s(\mathcal{X})\leq 2$. Moreover, $s(\mathcal{X})=2$ unless $\Char(\mathbb{F}_q)$ is a primitive root modulo $n$.
\end{theo}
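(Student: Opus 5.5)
We split the statement into the upper bound $s(\mathcal{X})\le 2$ and the lower bound $s(\mathcal{X})\ge 2$ when $p=\Char(\mathbb{F}_q)$ is not a primitive root modulo $n$. Throughout we use the standard reformulation of $m$-separability: $\mathcal{X}$ is $m$-separable iff every algebraic isomorphism $\varphi$ from $\mathcal{X}$ to a scheme $\mathcal{X}'$ with the same $m$-dimensional intersection numbers is induced by a combinatorial isomorphism. For $m=1$ this is the usual notion: every algebraic isomorphism is induced by a combinatorial one.

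\textbf{Upper bound.} Let $\mathcal{X}'$ be a scheme such that $\mathcal{X}$ and $\mathcal{X}'$ have the same tensor of $2$-dimensional intersection numbers via an algebraic isomorphism $\varphi$. I would first recover the combinatorial skeleton of $\mathcal{X}'$. In $\mathcal{X}$ the thin relations form a cyclic group of order $n$, given by multiplication by the subgroup $K\le\mathbb{F}_q^*$ of index $n$. These relations form the equivalence whose $n$-element classes are the fibers of the antipodal covers. The nonthin relations are antipodal distance-regular covers of $K_{q+1}$ of diameter $3$. All of this is determined by $1$-dimensional intersection numbers, so $\mathcal{X}'$ has the same shape. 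The quotient by the thin part is a scheme on $q+1$ points, the projective line.

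The $2$-dimensional numbers give more: they determine the $2$-extension, that is, the coherent configuration $\widehat{\mathcal{X}}$ on $\Omega^2$. Fixing a pair $(\alpha,\beta)$ in a nonthin relation, I would show that the coherent closure of $\mathcal{X}$ with two points individualized is the discrete partition. Concretely, two nonantipodal points fix two points of the projective line together with scalings. Then every remaining point $\gamma$ is determined by the pair of relations $(r(\alpha,\gamma),r(\beta,\gamma))$, since the bilinear form values with two independent vectors determine the vector modulo $K$. This makes $\mathcal{X}$ $2$-separable by the standard criterion (cf.~\cite{CP}): if a one-point or two-point extension is separable and the base is determined, separability lifts. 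Equivalently, I would construct the bijection $\Omega\to\Omega'$ explicitly. Send $\alpha,\beta$ to $\alpha',\beta'$ in the corresponding relation, and send each $\gamma$ to the unique $\gamma'$ with the $\varphi$-corresponding pair of relations. The $2$-dimensional (triple) intersection numbers, which count triangles $(\alpha,\beta,\gamma)$ with prescribed sides, guarantee existence and uniqueness of $\gamma'$. The relation between $\gamma$ and $\delta$ is then checked using counts of $4$-configurations on $\alpha,\beta,\gamma,\delta$, which are again $2$-dimensional numbers. \emph{The main obstacle} is this last verification: that the relation between $\gamma$ and $\delta$ is a function of their coordinates relative to $(\alpha,\beta)$ that is preserved by $\varphi$. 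I would handle it by writing vectors in the basis given by $\alpha,\beta$ and computing $r(\gamma,\delta)$ via the determinant form, which is expressible through the relations to $\alpha$ and $\beta$.

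\textbf{Lower bound.} For $s(\mathcal{X})\ne1$ we need an algebraic automorphism not induced by any combinatorial isomorphism. Then $\mathcal{X}$ is not $1$-separable, since one may take $\mathcal{X}'=\mathcal{X}$. The thin relations give $\mathbb{Z}_n$, and algebraic automorphisms include maps induced by multiplication by units $k\in\mathbb{Z}_n^*$, acting on the cyclic part and on the nonthin relations compatibly with Galois twisting. Combinatorial automorphisms are contained in $\PGaL$-type normalizers, where $\aut(\mathcal{X})$ is known from~\cite{MR}. They induce on $\mathbb{Z}_n$ only powers of the Frobenius, i.e.\ the subgroup $\langle p\rangle\le\mathbb{Z}_n^*$. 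So if $p$ is not a primitive root modulo $n$, choose $k\notin\langle p\rangle$. The corresponding algebraic automorphism, defined via the character/Galois map $x\mapsto x^k$ on $\mathbb{F}_q^*/K$, is not induced by any isomorphism. Checking that this map genuinely preserves the intersection numbers will use the explicit formulas for the structure constants of Tatra schemes from Section~3.
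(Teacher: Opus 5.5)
Your lower bound is the paper's argument. The map $r_h\mapsto r_{h^k}$, $s_h\mapsto s_{h^k}$ is $\varphi_{\sigma_0,e}$ from Lemma~\ref{algiso}. It is not induced, because every $f\in\iso(\mathcal{X})=\GaL(2,q)^\Omega$ acts on thin relations by $r_g\mapsto r_{g^\sigma}$ with $\sigma\in\Sigma$ (Lemma~\ref{combiso}(3),(4)).

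Your upper bound, however, rests on a claim that is false as soon as $m=|K|>1$. For $(\alpha,\beta)\in s_g$, the number of $\gamma$ with $(\alpha,\gamma)\in s_h$ and $(\gamma,\beta)\in s_k$ is $c^{s_g}_{s_hs_k}=m$ by Lemma~\ref{scheme}(5), not $1$. To see why, write $\alpha=Ku$, $\beta=Kv$, $\gamma=Kw$ with $w=au+bv$. The two form values fix $bK$ and $aK$ \emph{separately}, while passing to $Kw$ only absorbs a \emph{common} factor from $K$, so $m$ classes remain. The coherent closure does not rescue this either. For $d\in K$ the matrix $\mathrm{diag}(1,d)$ lies in $\GL(2,q)_K$, fixes $\alpha=K(1,0)$ and $\beta=K(0,1)$ (a pair in $s_e$), and acts on $\alpha s_e=\{K(x,1):x\in\mathbb{F}_q\}$ as $x\mapsto d^{-1}x$. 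All relations of the two-point extension $\mathcal{X}_{\alpha,\beta}=(\mathcal{X}_\alpha)_\beta$ are invariant under $\aut(\mathcal{X})_{\alpha,\beta}$, so $\mathcal{X}_{\alpha,\beta}$ is not discrete; this already happens for $q=5$, $n=2$. Your map $\gamma\mapsto\gamma'$ is therefore not well defined, and at best the case $K=1$ survives.

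Even for $m=1$ your last step is unsupported. The mechanism behind Lemma~\ref{s1} is that a $2$-dimensional algebraic isomorphism yields an algebraic isomorphism of \emph{one}-point extensions $\mathcal{X}_\alpha\to\mathcal{X}'_{\alpha'}$. A discrete two-point extension only gives $s(\mathcal{X})\le s(\mathcal{X}_{\alpha,\beta})+2=3$. Your assertion that $4$-point counts are $2$-dimensional intersection numbers controlling the relation between $\gamma$ and $\delta$ is neither standard nor proved. The paper instead aims at separability of $\mathcal{Y}=\mathcal{X}_\alpha$. Lemma~\ref{fibere} gives valency-one relations from $\Delta=\alpha s_e$ to every other fiber, so by Lemma~\ref{deletefiber} it suffices that $\mathcal{Y}_\Delta$ be separable. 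For $m=1$ your uniqueness observation is exactly the regularity of $\mathcal{Y}_\Delta$, and this route is sound.

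Be aware, though, that the paper's Lemma~\ref{deltareg} asserts regularity of $\mathcal{Y}_\Delta$ for all $m$, which would make $\mathcal{X}_{\alpha,\beta}$ discrete for $\beta\in\Delta$. It fails for $m>1$ for the same reason as your claim: $s_g\cap(\Delta\times\Delta)$ has valency $c^{s_e}_{s_es_g}=m$. Its proof goes wrong because the steps $\langle\beta,\gamma-\delta\rangle=0$ and $\langle\alpha,\gamma-\delta\rangle=0$ silently use different $K$-representatives of $\gamma$ and $\delta$. In fact $\mathcal{Y}_\Delta$ is the cyclotomic scheme on $\mathbb{F}_q$ with basis relations $\{(x,y):K(x-y)=g\}$, $g\in C$, which is regular only when $K=1$. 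So for $m>1$ neither your argument nor the paper's proves $s(\mathcal{X})\le 2$. An additional input is needed, for example separability of this cyclotomic scheme.
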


Observe that if $\Char(\mathbb{F}_q)$ is a primitive root modulo $n$, then the Tatra scheme $\mathcal{X}(q,n)$ can be $1$-separable. For example, the smallest Tatra scheme $\mathcal{X}(4,3)$ of degree~$15$ is $1$-separable and hence determined up to isomorphism by the tensor of its intersection numbers. This can be easily verified using the list of small schemes~\cite{HM}. In this case, $\Char(\mathbb{F}_q)=2$ is a primitive root modulo $n=3$.

To prove the first part of Theorem~\ref{main}, we show that a one-point extension of a Tatra scheme is $1$-separable. To prove the second one, we compute the group of algebraic automorphisms of a Tatra scheme (Lemma~\ref{algiso}) and use the results on combinatorial isomorphisms of a Tatra scheme (Lemma~\ref{combiso}).

Clearly, Theorem~\ref{main} implies the following corollary.

\begin{corl1}
Every Tatra association scheme is $2$-separable.
\end{corl1}

We finish the introduction with a brief outline of the paper. Section~2 contains a necessary background of coherent configurations, their isomorphisms, and point extensions. In Section~3, we provide information on Tatra schemes. In particular, we study their algebraic automorphisms (Section~3.3). In Section~4, we prove Theorem~\ref{main}.

\section{Coherent configurations}

Throughout the text, we freely use basic definitions and facts from the theory of coherent configurations and recall in this section some of them which will be crucial for further explanation. For a background of coherent configurations, we refer the reader to~\cite{CP}, where the terminology used in the paper and all facts not explained in detail are contained. A composition of two binary relations $r$ and $s$ on the same set is denoted by $rs$. The relation inverse to a binary relation $s$ is denoted by $s^*$. The equivalence closure of a binary relation $s$, i.e., the smallest with respect to inclusion equivalence relation on the same set, is denoted by $\langle s \rangle$.

\subsection{Notation and basic facts}

Let $\mathcal{X}=(\Omega,S)$ be a coherent configuration on a finite set $\Omega$ and $S$ the set of all basis relations of $\mathcal{X}$. If the diagonal $1_{\Omega}$ of $\Omega\times \Omega$ is a basis relation of $\mathcal{X}$, then $\mathcal{X}$ is an \emph{association scheme} or just a \emph{scheme}. A binary relation on $\Omega$ is defined to be a \emph{relation} of $\mathcal{X}$ if it is a union of some basis relations. If $s$ is a relation of $\mathcal{X}$, then $\langle s \rangle$ is a parabolic of $\mathcal{X}$. Given a coherent configuration $\mathcal{X}^\prime$ on $\Omega$, put $\mathcal{X}^\prime\geq \mathcal{X}$ if every basis relation of $\mathcal{X}$ is a relation of $\mathcal{X}^\prime$.

Given $r,s,t\in S$, the corresponding intersection number of $\mathcal{X}$ is denoted by $c_{rs}^t$. If $s$ is a relation of $\mathcal{X}$ and $\alpha\in \Omega$, then $n_s$ denotes the valency of $s$ and $\alpha s=\{\beta\in \Omega:~(\alpha,\beta)\in s\}$. Clearly, $n_s=|\alpha s|$ for every $\alpha \in \Omega$ such that $\alpha s\neq \varnothing$. An association scheme $\mathcal{X}$ such that $n_s=1$ for all $s\in S$ is said to be \emph{regular} or \emph{thin}.

The set of all fibers of $\mathcal{X}$ is denoted by $F(\mathcal{X})$. Given $\Delta,\Delta^\prime\in F(\mathcal{X})$, put $S_{\Delta,\Delta^\prime}=\{s\in S:~s\subseteq\Delta\times\Delta^\prime\}$ and $S_{\Delta}=S_{\Delta,\Delta}$. If $\Delta\in F(\mathcal{X})$, then the pair 
$$\mathcal{X}_{\Delta}=(\Delta,S_{\Delta})$$
is a scheme on $\Delta$.

\subsection{Isomorphisms and separability}

Let $\mathcal{X}=(\Omega,S)$ and $\mathcal{X}^{\prime}=(\Omega^{\prime},S^{\prime})$ be coherent configurations. An \emph{algebraic isomorphism} from $\mathcal{X}$ to $\mathcal{X}^{\prime}$ is defined to be a bijection $\varphi:S\rightarrow S^{\prime}$ such that
$$c_{rs}^t=c_{r^\varphi,s^\varphi}^{t^\varphi}$$
for all $r,s,t\in S$. In this case,
\begin{equation}\label{isoalgprop1}
n_{s^\varphi}=n_s
\end{equation}
and 
\begin{equation}\label{isoalgprop2}
\langle s\rangle^\varphi=\langle s^\varphi \rangle
\end{equation}
for every $s\in S$ by~\cite[Corollary~2.3.20]{CP} and~\cite[Exercise~2.7.29(2)]{CP}, respectively. The set of all algebraic isomorphisms from $\mathcal{X}$ to itself (algebraic automorphisms) forms the subgroup of $\sym(S)$ denoted by $\aut_{\alg}(\mathcal{X})$.

A (\emph{combinatorial}) \emph{isomorphism} from $\mathcal{X}$ to $\mathcal{X}^{\prime}$ is defined to be a bijection $f:\Omega\rightarrow \Omega^{\prime}$ such that $S^{\prime}=S^f$, where $S^f=\{s^f:~s\in S\}$ and $s^f=\{(\alpha^f,\beta^f):~(\alpha,~\beta)\in s\}$. The group $\iso(\mathcal{X})$ of all isomorphisms from $\mathcal{X}$ onto itself has a normal subgroup
$$\aut(\mathcal{X})=\{f\in \iso(\mathcal{X}): s^f=s~\text{for every}~s\in S\}$$
called the \emph{automorphism group} of $\mathcal{X}$. A coherent configuration is said to be \emph{schurian} if $S$ is equal to the set of all orbits of $\aut(\mathcal{X})$ acting on $\Omega^2$ componentwise.

Every isomorphism between coherent configurations induces in a natural way the algebraic isomorphism between them. However, not every algebraic isomorphism is induced by a combinatorial one. A coherent configuration is said to be \emph{separable} if every algebraic isomorphism from it to any coherent configuration is induced by an isomorphism. The subgroup of $\aut_{\alg}(\mathcal{X})$ consisting of all algebraic automorphisms induced by the elements of $\iso(\mathcal{X})$ is denoted by $\aut^{\ind}_{\alg}(\mathcal{X})$.

It is easy to see that $f\in \iso(\mathcal{X})$ induces the trivial algebraic automorphism of $\mathcal{X}$ which fixes every basis relation if and only if $f\in \aut(\mathcal{X})$ and hence $f_1,f_2\in \iso(\mathcal{X})$ induce the same algebraic isomorphism if and only if $f_2f_1^{-1}\in \aut(\mathcal{X})$. Therefore
\begin{equation}\label{algind}
|\aut^{\ind}_{\alg}(\mathcal{X})|=\frac{|\iso(\mathcal{X})|}{|\aut(\mathcal{X})|}.
\end{equation}

\begin{lemm}\cite[Theorem~2.3.33]{CP}\label{regularsep}
Every regular association scheme is separable.
\end{lemm}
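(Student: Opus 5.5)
The plan is to exploit the fact that a regular scheme is the same thing as the Cayley (thin) scheme of a group. Let $\mathcal{X}=(\Omega,S)$ be regular and fix $\alpha\in\Omega$. Since $n_s=1$ for every $s\in S$, each $\beta\in\Omega$ is reached from $\alpha$ by exactly one basis relation, namely the one containing $(\alpha,\beta)$. This gives a bijection $\Omega\to S$. For $r,s\in S$ the composition $rs$ is again a single basis relation, because $c_{rs}^t\in\{0,1\}$ and $\sum_t c_{rs}^t n_t=n_rn_s=1$. Hence $S$ with composition is a group $G$: the identity is $1_\Omega$ and inverses are given by $s\mapsto s^*$. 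Under the bijection, $\mathcal{X}$ is isomorphic to the scheme on $G$ whose basis relations are $\{(x,xg):x\in G\}$, $g\in G$. The multiplication table of $G$ is encoded in the intersection numbers, since $c_{rs}^t=1$ exactly when $t=rs$.

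Now let $\varphi:\mathcal{X}\to\mathcal{X}'$ be an algebraic isomorphism. By \eqref{isoalgprop1}, every basis relation of $\mathcal{X}'$ has valency $n_{s^\varphi}=n_s=1$. Also $1_{\Omega'}$ is a basis relation of $\mathcal{X}'$, because it is the unique $t'$ with $c^{t'}_{s's'^*}=n_{s'}$ for all $s'$, and this property is preserved. So $\mathcal{X}'$ is regular as well, with group $G'$ built in the same way. Preservation of the intersection numbers $c_{rs}^t$ then says exactly that $\varphi:G\to G'$ is a group isomorphism.

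It remains to realize $\varphi$ combinatorially. Fix $\alpha'\in\Omega'$ and define $f:\Omega\to\Omega'$ by $f(\beta)=\alpha' s^\varphi$, where $s$ is the unique basis relation with $(\alpha,\beta)\in s$. Then $f$ is a bijection. The key check is that $(\beta,\gamma)\in r$ implies $(f(\beta),f(\gamma))\in r^\varphi$. To see this, write $(\alpha,\beta)\in s$, so that $(\alpha,\gamma)\in sr$. Then $f(\gamma)=\alpha'(sr)^\varphi=\alpha' s^\varphi r^\varphi$, and this point is the unique $r^\varphi$-neighbour of $f(\beta)=\alpha's^\varphi$. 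Hence $r^f=r^\varphi$ for all $r\in S$, so $f$ induces $\varphi$, which proves separability.

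The only mildly delicate step is showing that the target $\mathcal{X}'$ is itself regular. In particular one must check that $\varphi$ maps the diagonal to the diagonal, so that $\mathcal{X}'$ is a scheme rather than merely a coherent configuration. The argument above handles this through the valency identity, or alternatively via \cite[Corollary~2.3.20]{CP}. Everything else is bookkeeping.
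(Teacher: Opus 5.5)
Your proof is correct. It is essentially the standard argument behind \cite[Theorem~2.3.33]{CP}, which the paper only cites without proof: view the regular scheme as the thin scheme of the group $S$ under composition, note that an algebraic isomorphism onto the necessarily regular target is a group isomorphism, and realize it by the point map $\beta\mapsto\alpha's^\varphi$. The details you leave implicit are routine: your characterization of $1_{\Omega'}$ uses $(s^*)^\varphi=(s^\varphi)^*$, and the inclusion $r^f\subseteq r^\varphi$ becomes equality because both $\{r^f:~r\in S\}$ and $\{r^\varphi:~r\in S\}$ partition $\Omega'\times\Omega'$.
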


\begin{lemm}\cite[Lemma~9.2]{EP}\label{deletefiber}
Let $\mathcal{X}=(\Omega,S)$ be a coherent configuration and $\Delta\in F(\mathcal{X})$ such that for every $\Delta^\prime\in F(\mathcal{X})$ with $\Delta^\prime\neq \Delta$, there is a basis relation $r\subseteq \Delta\times \Delta^\prime$ of valency~$1$. Then $\mathcal{X}$ is separable whenever so is $\mathcal{X}_{\Delta}$.
\end{lemm}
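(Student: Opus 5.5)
This is a cited result (Lemma~9.2 of~\cite{EP}), so I only sketch how one would prove it with the tools of Section~2.

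Let $\varphi:\mathcal{X}\to\mathcal{X}'$ be an algebraic isomorphism, where $\mathcal{X}'=(\Omega',S')$. Algebraic isomorphisms respect fibers, because a fiber corresponds to a basis relation contained in the diagonal, and these are identified through intersection numbers. So $\varphi$ induces a bijection $\Delta\mapsto\Delta'$ of fibers. It also restricts to an algebraic isomorphism $\varphi_\Delta:\mathcal{X}_\Delta\to\mathcal{X}'_{\Delta'}$. By hypothesis $\mathcal{X}_\Delta$ is separable, so there is a bijection $f_\Delta:\Delta\to\Delta'$ inducing $\varphi_\Delta$. The plan is to extend $f_\Delta$ to all of $\Omega$ using the valency-one relations.

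For each fiber $\Gamma\neq\Delta$, fix $r_\Gamma\subseteq\Delta\times\Gamma$ of valency~$1$. By~\eqref{isoalgprop1}, $r_\Gamma^\varphi\subseteq\Delta'\times\Gamma'$ also has valency~$1$. Since $c_{r_\Gamma^* r_\Gamma}^{1_\Gamma}=n_{r_\Gamma^*}$ and $r_\Gamma r_\Gamma^*$ is a relation on $\Delta$, the map $\alpha\mapsto\alpha r_\Gamma$ sends $\Delta$ onto $\Gamma$. Its fibres are the classes of the equivalence relation $e_\Gamma=r_\Gamma r_\Gamma^*$, which is a parabolic of $\mathcal{X}_\Delta$, and the same holds on the primed side with $e_\Gamma^\varphi$. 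Because $f_\Delta$ induces $\varphi_\Delta$, it maps $e_\Gamma$-classes to $e_\Gamma^\varphi$-classes. So we can define $f$ on $\Gamma$ by
$$(\alpha r_\Gamma)^f=\alpha^{f_\Delta}\, r_\Gamma^\varphi ,$$
which is well defined and bijective. Taking $f=f_\Delta$ on $\Delta$ then gives a bijection $f:\Omega\to\Omega'$.

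It remains to check that $s^f=s^\varphi$ for every $s\in S$. For $s\subseteq\Gamma_1\times\Gamma_2$, write $s$ as a union of products $r_{\Gamma_1}^* t\, r_{\Gamma_2}$ with $t\in S_\Delta$ (taking $r_\Delta=1_\Delta$). Concretely, $(\alpha r_{\Gamma_1},\beta r_{\Gamma_2})\in s$ if and only if $(\alpha,\beta)\in r_{\Gamma_1}sr_{\Gamma_2}^*$, and the latter is a relation of $\mathcal{X}_\Delta$. Algebraic isomorphisms preserve such compositions: the set of basis relations contained in a product is determined by intersection numbers. Hence
$$\bigl(r_{\Gamma_1}sr_{\Gamma_2}^*\bigr)^\varphi=r_{\Gamma_1}^\varphi s^\varphi (r_{\Gamma_2}^\varphi)^*.$$
The main obstacle is exactly here. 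One must show that $s$ is recovered from $r_{\Gamma_1}sr_{\Gamma_2}^*$ as $r_{\Gamma_1}^*(r_{\Gamma_1}sr_{\Gamma_2}^*)r_{\Gamma_2}$, which holds since $r_{\Gamma_i}$ are functions and $r_{\Gamma_1}^* r_{\Gamma_1}=1_{\Gamma_1}$. One must also show that this identity transfers through $\varphi$ basis relation by basis relation, which again uses that the $r^\varphi$ have valency~$1$. Granting this, $f$ maps each $s$ onto $s^\varphi$, so $f$ induces $\varphi$ and $\mathcal{X}$ is separable.
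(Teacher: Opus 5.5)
The paper does not prove this lemma; it only quotes it from \cite{EP}, so there is no in-paper argument to compare against. Your proposal is a correct, self-contained proof. You restrict $\varphi$ to $\mathcal{X}_\Delta$, realize the restriction by $f_\Delta$, and push $f_\Delta$ out to each fiber $\Gamma$ along the functional relation $r_\Gamma$, using that $f_\Delta$ carries the kernel $e_\Gamma=r_\Gamma r_\Gamma^*$ onto $e_\Gamma^\varphi$.

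The step you flag as the ``main obstacle'' and then grant is not actually a gap; it closes in one line. By construction $r_\Gamma^f=\{(\alpha^{f_\Delta},\alpha^{f_\Delta}r_\Gamma^\varphi):\alpha\in\Delta\}=r_\Gamma^\varphi$. For $u=r_{\Gamma_1}sr_{\Gamma_2}^*$, a relation inside $\Delta\times\Delta$, you have $u^f=u^{f_\Delta}=u^\varphi$. Moreover, $\varphi$, extended to unions of basis relations, commutes with composition and transposition for arbitrary relations, with no valency hypothesis. Combined with your identity $s=r_{\Gamma_1}^*ur_{\Gamma_2}$, this gives
$$s^f=(r_{\Gamma_1}^*ur_{\Gamma_2})^f=(r_{\Gamma_1}^f)^*u^fr_{\Gamma_2}^f=(r_{\Gamma_1}^\varphi)^*u^\varphi r_{\Gamma_2}^\varphi=(r_{\Gamma_1}^*ur_{\Gamma_2})^\varphi=s^\varphi .$$
So valency one of $r_\Gamma^\varphi$ is needed only to make $f$ well defined and bijective on $\Gamma$, which you already did.

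A minor point: surjectivity of $\alpha\mapsto\alpha r_\Gamma$ is just $n_{r_\Gamma^*}\ge 1$. The mention of $r_\Gamma r_\Gamma^*$ at that spot is unnecessary.
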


Let $m\geq 1$. The $m$-extension of a coherent configuration $\mathcal{X}$ on $\Omega$ is defined to be the smallest coherent configuration on $\Omega^m$ which contains the Cartesian $m$-power of $\mathcal{X}$ and for which the diagonal of $\Omega^m$ is the union of fibers. The intersection numbers of the $m$-extension are called the \emph{$m$-dimensional intersection numbers} of the coherent configuration $\mathcal{X}$. If $m=1$, then the $m$-extension of $\mathcal{X}$ coincides with $\mathcal{X}$ and the $m$-dimensional intersection numbers of $\mathcal{X}$ are the ordinary intersection numbers.

An algebraic isomorphism $\varphi$ from $\mathcal{X}$ to $\mathcal{X}^\prime$ is said to be \emph{$m$-dimensional} if it can be extended to an algebraic isomorphism from the $m$-extension of $\mathcal{X}$ to that of $\mathcal{X}^\prime$ that takes the diagonal of $\Omega^m$ to the diagonal of $(\Omega^\prime)^m$. The coherent configuration $\mathcal{X}$ is said to be \emph{$m$-separable} if every $m$-dimensional algebraic isomorphism from $\mathcal{X}$ is induced by some (combinatorial) isomorphism. The \emph{separability number} $s(\mathcal{X})$ of $\mathcal{X}$ is defined to be the smallest positive integer $m$ for which $\mathcal{X}$ is $m$-separable. Clearly, $s(\mathcal{X})=1$ if and only if $\mathcal{X}$ is separable. The equality $s(\mathcal{X})=m$ expresses the fact that $\mathcal{X}$ is determined up to isomorphism by the tensor of its $m$-dimensional intersection numbers.

\subsection{One-point extension}

Given $\alpha \in \Omega$, the \emph{one-point extension} $\mathcal{X}_{\alpha}$ of $\mathcal{X}$ with respect to $\alpha$ is defined to be the smallest coherent configuration on $\Omega$ such that $\mathcal{X}_{\alpha}\geq \mathcal{X}$ and $\{\alpha\}\in F(\mathcal{X}_{\alpha})$. The first statement of the lemma below is~\cite[Lemma~3.3.5(2)]{CP} whereas the second one is~\cite[Theorem~3.3.7(1)]{CP}. 

\begin{lemm}\label{onepoint}
Let $\mathcal{X}=(\Omega,S)$ be a coherent configuration and $\alpha\in \Omega$.
\begin{enumerate}

\tm{1} If $r\cap (\alpha s\times \alpha t)\neq \varnothing$, then $r\cap (\alpha s\times \alpha t)$ is a relation of $\mathcal{X}_{\alpha}$ for all $r,s,t\in S$. 

\tm{2} If $\mathcal{X}$ is schurian, then $F(\mathcal{X}_{\alpha})=\{\alpha s:~s\in S\}$. 

\end{enumerate}
\end{lemm}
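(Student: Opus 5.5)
The statement collects two standard facts, and I would prove them in the order given: part (1) from the minimality of $\mathcal{X}_\alpha$ together with closure of relations under intersection, composition and transposition, and part (2) by comparing $\mathcal{X}_\alpha$ with the orbit configuration of the point stabilizer.

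\emph{Part (1).} The key step is that each neighbourhood $\alpha s$ is a union of fibers of $\mathcal{X}_\alpha$.
\begin{enumerate}
\tm{a} Since $\mathcal{X}_\alpha\geq\mathcal{X}$, every $s\in S$ is a relation of $\mathcal{X}_\alpha$.
\tm{b} Since $\{\alpha\}\in F(\mathcal{X}_\alpha)$, the diagonal relation $1_{\{\alpha\}}$ is a relation of $\mathcal{X}_\alpha$.
\tm{c} Hence the composition $u=1_{\{\alpha\}}\,s=\{\alpha\}\times\alpha s$ is a relation of $\mathcal{X}_\alpha$.
\tm{d} Then $1_\Omega\cap u^*u=1_{\alpha s}$ is a relation of $\mathcal{X}_\alpha$. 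A diagonal relation is a union of diagonals of fibers, so $\alpha s$ is a union of fibers of $\mathcal{X}_\alpha$.
\tm{e} Consequently $\alpha s\times\alpha t=1_{\alpha s}\,(\Omega\times\Omega)\,1_{\alpha t}$ is a relation of $\mathcal{X}_\alpha$.
\tm{f} Intersecting with the relation $r$ shows that $r\cap(\alpha s\times\alpha t)$ is a relation of $\mathcal{X}_\alpha$ whenever it is nonempty.
\end{enumerate}
Every step is routine closure, so I do not expect difficulties here.

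\emph{Part (2).} Let $K=\aut(\mathcal{X})_\alpha$, and let $\mathcal{Y}$ be the coherent configuration whose basis relations are the orbits of $K$ on $\Omega^2$.
\begin{enumerate}
\tm{a} Each $s\in S$ is $K$-invariant, so $\mathcal{Y}\geq\mathcal{X}$.
\tm{b} The point $\alpha$ is fixed by $K$, so $\{\alpha\}\in F(\mathcal{Y})$.
\tm{c} By the minimality in the definition of the one-point extension, $\mathcal{Y}\geq\mathcal{X}_\alpha$. Hence every fiber of $\mathcal{X}_\alpha$ is a union of $K$-orbits on $\Omega$.
\tm{d} I next identify the $K$-orbits using schurity. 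Each $s\in S$ is an $\aut(\mathcal{X})$-orbit on $\Omega^2$. So if $\beta,\gamma\in\alpha s$, some $g\in\aut(\mathcal{X})$ maps $(\alpha,\beta)$ to $(\alpha,\gamma)$, and this $g$ lies in $K$. Thus $K$ is transitive on $\alpha s$, and its orbits on $\Omega$ are exactly the nonempty sets $\alpha s$ with $s\in S$.
\tm{e} Combining (c) and (d), each fiber of $\mathcal{X}_\alpha$ is a union of sets $\alpha s$.
\tm{f} Conversely, by step (d) of part (1), each $\alpha s$ is a union of fibers of $\mathcal{X}_\alpha$.
\end{enumerate}
Since the sets $\alpha s$ partition $\Omega$, the two inclusions in (e) and (f) give $F(\mathcal{X}_\alpha)=\{\alpha s:~s\in S\}$, discarding empty sets. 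In a scheme none of the sets $\alpha s$ is empty anyway.

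The only place I expect care to be needed is step (c) of part (2). It relies on the fact that the orbit partition of a permutation group is a coherent configuration, so that it is eligible for the minimality comparison.
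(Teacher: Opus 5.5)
Your proof is correct. Part (1) is the routine closure argument showing each $\alpha s$ is a homogeneity set of $\mathcal{X}_\alpha$. Part (2) compares $\mathcal{X}_\alpha$ with the orbit configuration of $\aut(\mathcal{X})_\alpha$ using minimality, together with transitivity of the stabilizer on each $\alpha s$. The paper gives no proof of its own here and only cites Lemma~3.3.5(2) and Theorem~3.3.7(1) of the Chen--Ponomarenko monograph; your argument appears to be the standard one behind those results.
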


\begin{lemm}\cite[Theorem~4.6(1)]{EP0}\label{s1}
Let $\mathcal{X}$ be a coherent configuration. Then $s(\mathcal{X})\leq s(\mathcal{X}_{\alpha})+1$ for every point $\alpha$ of $\mathcal{X}$.
\end{lemm}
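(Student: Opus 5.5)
The plan is to set $m=s(\mathcal{X}_{\alpha})$ and show directly that every $(m+1)$-dimensional algebraic isomorphism $\varphi$ from $\mathcal{X}$ to a coherent configuration $\mathcal{X}'$ is induced by a combinatorial isomorphism. This gives $s(\mathcal{X})\le m+1$. The guiding idea is that the $(m+1)$-extension of $\mathcal{X}$ already contains the $m$-extension of $\mathcal{X}_\alpha$: the last coordinate plays the role of the individualized point.

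First I would fix an extension $\widehat\varphi$ of $\varphi$ to an algebraic isomorphism from the $(m+1)$-extension $\widehat{\mathcal{X}}$ of $\mathcal{X}$ to that of $\mathcal{X}'$, taking the diagonal of $\Omega^{m+1}$ to the diagonal of $(\Omega')^{m+1}$. The point $\alpha$, viewed as the diagonal tuple $(\alpha,\dots,\alpha)$, lies in some fiber of $\widehat{\mathcal{X}}$ contained in the diagonal. Since algebraic isomorphisms map fibers to fibers, $\widehat\varphi$ sends this fiber to a diagonal fiber of the $(m+1)$-extension of $\mathcal{X}'$. I choose $\alpha'$ so that $(\alpha',\dots,\alpha')$ lies in that image fiber.

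Next I would compare two objects. One is the coherent configuration on $\Omega^m$ obtained by restricting $\widehat{\mathcal{X}}$ to the set $\Omega^m\times\{\alpha\}$, that is, by intersecting basis relations with $(\Omega^m\times\{\alpha\})^2$. The other is the $m$-extension of $\mathcal{X}_\alpha$. I would show that the restriction is at least as fine as the $m$-extension of $\mathcal{X}_\alpha$. The reason is that the set of tuples whose last coordinate equals some earlier coordinate, together with the coordinate-projection relations in $\widehat{\mathcal{X}}$, allows one to individualize $\alpha$ coordinatewise, and the Cartesian structure supplies the rest. Since the intersection numbers of such restrictions are determined by those of $\widehat{\mathcal{X}}$, the isomorphism $\widehat\varphi$ induces an algebraic isomorphism $\varphi_\alpha$ from $\mathcal{X}_\alpha$ to $\mathcal{X}'_{\alpha'}$. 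This $\varphi_\alpha$ extends to the $m$-extensions, so it is $m$-dimensional, and it is compatible with $\varphi$: for each $s\in S$, the union of the basis relations of $\mathcal{X}_\alpha$ inside $s$ is mapped to $s^\varphi$.

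Finally, since $s(\mathcal{X}_\alpha)=m$, the configuration $\mathcal{X}_\alpha$ is $m$-separable, so $\varphi_\alpha$ is induced by a bijection $f$ with $\alpha^f=\alpha'$. Because every $s\in S$ is a union of basis relations of $\mathcal{X}_\alpha$, we get $s^f=s^{\varphi_\alpha}=s^{\varphi}$. Hence $f$ is an isomorphism from $\mathcal{X}$ to $\mathcal{X}'$ inducing $\varphi$, which completes the argument.

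The main obstacle is the middle step: making precise that the $(m+1)$-extension, restricted to tuples with last coordinate $\alpha$, refines the $m$-extension of $\mathcal{X}_\alpha$, and that $\widehat\varphi$ genuinely transports this restriction onto the corresponding object for $\alpha'$. The second half needs care because $\alpha'$ is defined only via a fiber. To handle it, I would first try an induction on the Weisfeiler–Leman type closure steps. Each step refining $\mathcal{X}_\alpha$ (or its $m$-extension) corresponds to a step in $\widehat{\mathcal{X}}$ that is definable by intersection numbers, so it is preserved by $\widehat\varphi$.
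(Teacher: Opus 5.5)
Your argument is correct and is essentially the standard argument behind the cited result of Evdokimov and Ponomarenko. The paper itself only quotes this lemma and does not prove it. In that argument, an $(m+1)$-dimensional algebraic isomorphism yields, for each $\alpha$, a point $\alpha'$ and an $m$-dimensional algebraic isomorphism $\mathcal{X}_\alpha\to\mathcal{X}'_{\alpha'}$ compatible with $\varphi$, and $m$-separability of $\mathcal{X}_\alpha$ then finishes.

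The middle step you flag closes in three moves:
\begin{enumerate}
\item $\Omega^m\times\{\alpha\}$ is a class of the parabolic $(\Omega^2)^{\otimes m}\otimes 1_\Omega$ of the $(m+1)$-extension. So the restriction is a coherent configuration whose basis relations depend only on the fiber of $(\alpha,\dots,\alpha)$, which is why choosing $\alpha'$ via that fiber suffices.
\item Algebraic isomorphisms, extended to adjacency algebras, commute with coherent closure. So instead of an induction on closure steps, you only need $\widehat\varphi$ to send the generators (the relations of $\mathcal{X}^{\otimes m}$, the diagonal, and the sets $\{x: x_i=x_{m+1}\}$) to their counterparts.
\item The induced maps agree in all coordinates, because an algebraic isomorphism of $\mathcal{X}_\alpha$ extending $\varphi$ with $1_{\{\alpha\}}\mapsto 1_{\{\alpha'\}}$ is unique.
\end{enumerate}
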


\section{Tatra schemes}

In this section, we recall the construction of Tatra schemes and several their properties required for the proof of Theorem~\ref{main}. 

\subsection{Construction and basic properties}
Let $r$ be a prime, $q=r^d$ for some $d\geq 1$, and $\mathbb{F}=\mathbb{F}_q$ a field of order~$q$. The multiplicative group of $\mathbb{F}$ is denoted by $\mathbb{F}^*$. Let $n$ be a divisor of $q-1$ such that 
$$\frac{q(q-1)}{n}~\text{is even},$$ 
$K$ a subgroup of $\mathbb{F}^*$ of index~$n$, and $C=\mathbb{F}^*/K$. Clearly, $C\cong C_n$ and $m=|K|=(q-1)/n$ is even whenever $q$ is odd and hence $(-1)\in K$. We denote the identity element of $C$ by~$e$. If $x\in\mathbb{F}$ and $g=Ky\in C$, then put $xg=K(xy)$.

Let $V$ be a $2$-dimensional vector space over $\mathbb{F}$ and 
$$\Omega=\{Kv:~v\in V\setminus \{0\}\},$$
where $Kv=\{xv:~x\in K\}$.
One can see that 
$$|\Omega|=|V\setminus \{0\}|/|K|=(q^2-1)/m=(q+1)n.$$ 
Let
$$\langle\cdot,~\cdot \rangle: \Omega\times \Omega \rightarrow C\cup \{0\}$$
be the form defined as follows:
$$\langle Ku,Kv\rangle=K\det(u,v),$$
where for the vectors $u=(u_1,u_2)^T$ and $v=(v_1,v_2)^T$, 
$$\det(u,v)=u_1v_2-u_2v_1.$$
It is easy to see that $\langle Ku,Kv\rangle=0$ if and only if $Kv=gKu$ for some $g\in C$. The form $\langle\cdot,~\cdot \rangle$ is well-defined by~\cite[Lemma~3]{Reich}. It is also symmetric. Indeed, this is clear if $q$ is even. If $q$ is odd, then $m$ is even and hence $(-1)\in K$ which implies the symmetry of $\langle\cdot,~\cdot \rangle$.

Given $g\in C$, let us define two binary relations $r_g$ and $s_g$ on $\Omega^2$ as follows:
$$r_g=\{(\alpha,\beta)\in \Omega^2:~\langle \alpha,\beta \rangle=0,~\beta=g\alpha\},$$
$$s_g=\{(\alpha,\beta)\in \Omega^2:~\langle \alpha,\beta \rangle=g\}.$$
Observe that $r_e=1_{\Omega}$. Due to~\cite[Theorem~3]{Reich}, the pair  
$$\mathcal{X}=(\Omega,S),~\text{where}~S=\{r_g,s_g:~g\in C\}$$
is a scheme. This scheme $\mathcal{X}$ is defined to be a \emph{Tatra scheme}. Clearly, $\rk(\mathcal{X})=2|C|=2n$.

\begin{lemm}\cite[Proposition~3.1]{MR}\label{scheme}
With the above notation, 
\begin{enumerate}
\tm{1} $n_{r_g}=1$, $n_{s_g}=q$,

\tm{2} $r_g^*=r_{g^{-1}}$, $s_g^*=s_g$

\tm{3} $r_hr_g=r_gr_h=r_{hg}$,

\tm{4} $r_hs_g=s_{h^{-1}g}$, $s_gr_h=s_{gh}$,

\tm{5} $c_{s_hs_g}^{r_{h^{-1}g}}=q$,~$c_{s_hs_g}^{r_{x}}=0$,~$c_{s_hs_g}^{s_{y}}=m$ 

\end{enumerate}
for all $h,g,x,y\in C$ such that $x\neq h^{-1}g$.
\end{lemm}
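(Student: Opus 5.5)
The plan is to verify each item by a direct computation with representatives, using $\langle Ku,Kv\rangle=K\det(u,v)$. The only tools needed are bilinearity of $\det$, its alternating property, and the fact that $\langle\cdot,\cdot\rangle$ is well defined and symmetric. The scalar action is compatible with the form: if $g=Ky$, then $\langle g\alpha,\beta\rangle=\langle\alpha,g\beta\rangle=g\langle\alpha,\beta\rangle$. This holds because $\det(yu,v)=\det(u,yv)=y\det(u,v)$.

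For (1), $\alpha r_g=\{g\alpha\}$, so $n_{r_g}=1$. For $s_g$, fix $\alpha=Ku$. For each $c\in g$, viewed as a coset of $K$, the equation $\det(u,v)=c$ is an affine line in $V$ not through $0$, so it has $q$ solutions $v$. This gives $qm$ vectors in total. The set is a union of $K$-orbits, each of size $m$, so it contains exactly $q$ points of $\Omega$. Item (2) is immediate: $\beta=g\alpha$ if and only if $\alpha=g^{-1}\beta$, and $s_g^*=s_g$ by the symmetry of the form. Item (3) follows because the $r$'s are the thin relations given by multiplication by elements of $C$.

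For (4), the relations $r_h$ are thin, so each composition is a single relation, which we identify by computing. Suppose $(\alpha,\gamma)\in r_hs_g$. Then $\beta=h\alpha$ and $\langle\beta,\gamma\rangle=g$, so $h\langle\alpha,\gamma\rangle=g$, that is, $\langle\alpha,\gamma\rangle=h^{-1}g$. The value is nonzero, so $(\alpha,\gamma)\in s_{h^{-1}g}$. Similarly, if $(\alpha,\gamma)\in s_gr_h$, then $\langle\alpha,\gamma\rangle=\langle\alpha,h\beta\rangle=gh$.

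For (5), count $\beta$ with $\langle\alpha,\beta\rangle=h$ and $\langle\beta,\gamma\rangle=g$.
\begin{itemize}
\item If $(\alpha,\gamma)\in r_x$, then $\gamma=x\alpha$. Hence $\langle\beta,\gamma\rangle=x\langle\beta,\alpha\rangle=xh$. This equals $g$ exactly when $x=h^{-1}g$, in which case every $\beta\in\alpha s_h$ qualifies and the count is $q$. Otherwise the count is $0$.
\item If $(\alpha,\gamma)\in s_y$, write $\alpha=Ku$ and $\gamma=Kw$ with $u,w$ linearly independent. The functionals $v\mapsto\det(u,v)$ and $v\mapsto\det(v,w)$ are then linearly independent on $V$. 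So for each pair $(c,d)\in h\times g$ there is exactly one $v$. This gives $m^2$ vectors, forming $m$ points of $\Omega$, independent of $y$.
\end{itemize}

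The step needing the most care is (5), specifically keeping track of which side the scalar $x$ acts on and which coset appears. The independence of the two linear functionals, which is where $\langle\alpha,\gamma\rangle\neq 0$ is used, is what makes the count uniform.
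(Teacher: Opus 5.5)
Your proof is correct. The paper gives no argument of its own for this lemma: it imports it from \cite[Proposition~3.1]{MR}. You instead verify (1)--(5) directly from $\langle Ku,Kv\rangle=K\det(u,v)$. You use only three facts: the form is well defined, it is symmetric, and $\langle g\alpha,\beta\rangle=\langle\alpha,g\beta\rangle=g\langle\alpha,\beta\rangle$.

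The counts check out.
\begin{itemize}
\item In (1), the set $\{v:\det(u,v)\in g\}$ has $qm$ elements and is $K$-stable, so it gives $q$ points of $\Omega$.
\item In the $r_x$ case of (5), symmetry gives $\langle\beta,x\alpha\rangle=xh$.
\item In the $s_y$ case of (5), the coefficient vectors $(-u_2,u_1)$ and $(w_2,-w_1)$ of the functionals $v\mapsto\det(u,v)$ and $v\mapsto\det(v,w)$ form a matrix of determinant $-\det(u,w)\neq 0$. Hence the $m^2$ solutions are distinct, nonzero and $K$-stable, giving $m$ points.
\end{itemize}

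The only outside input you use is in (4). There you rely on the fact that, in a scheme, the product of a thin basis relation with a basis relation is again a basis relation. This presupposes Reich's theorem that $\mathcal{X}$ is a scheme. That is legitimate, since the paper states it before the lemma, but you do not need it. If $\langle\alpha,\gamma\rangle=h^{-1}g$, then $\beta=h\alpha$ witnesses $(\alpha,\gamma)\in r_hs_g$. Similarly, $\beta=h^{-1}\gamma$ witnesses $(\alpha,\gamma)\in s_gr_h$ when $\langle\alpha,\gamma\rangle=gh$.

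With that one-line change your computation is fully self-contained. It also shows that every intersection number $c_{rs}^t$, for $r,s,t\in S$, does not depend on the chosen pair in $t$. When $r$ or $s$ is thin this follows from (3)--(4), and otherwise from (5). So you recover that $\mathcal{X}$ is a scheme rather than assuming it. What the citation buys the paper is brevity.
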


Put
$$r_C=\bigcup \limits_{g\in C} r_g.$$
One can see that $r_C$ is an equivalence relation on $\Omega$ consisting of all pairs $(\alpha,\beta)\in \Omega^2$ such that $\beta=g\alpha$ for some $g\in C$. Therefore $r_C$ is a parabolic of $\mathcal{X}$. Clearly, $n_{r_C}=|C|=n$ and $|\Omega/r_C|=|\Omega|/n_{r_C}=q+1$.

\begin{lemm}\label{distinct}
With the above notation, $|\alpha s_g\cap \Gamma|=1$ for all $\alpha\in \Omega$, $g\in C$, and $\Gamma\in \Omega/r_C$ such that $\alpha\notin \Gamma$.
\end{lemm}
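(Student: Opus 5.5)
Write $\alpha=Ku$ with $u\neq 0$. The classes of $r_C$ are the sets $\{gKv:~g\in C\}=\{Kxv:~x\in\mathbb{F}^*\}$, so they correspond exactly to the $q+1$ one-dimensional subspaces of $V$. Thus $\Gamma$ is the set of all $Kv$ with $v$ a nonzero vector of a fixed line $L=\mathbb{F}w$. The hypothesis $\alpha\notin\Gamma$ means $u\notin L$, so $u,w$ is a basis of $V$ and $\det(u,w)\neq 0$. The plan is a direct computation of $\langle\alpha,\beta\rangle$ as $\beta$ runs over $\Gamma$, using only the definition of the form.

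\emph{Key steps.} Every $\beta\in\Gamma$ has the form $\beta=K(xw)$ with $x\in\mathbb{F}^*$, and $K(xw)=K(x'w)$ exactly when $x'\in xK$. Hence $\beta\mapsto Kx$ is a bijection from $\Gamma$ onto $C=\mathbb{F}^*/K$, and in particular $|\Gamma|=n$. For such $\beta$ we get
$$\langle \alpha,\beta\rangle=K\det(u,xw)=K\bigl(x\det(u,w)\bigr)=(Kx)\,c,\qquad c=K\det(u,w)\in C.$$
Multiplication by the fixed element $c$ is a bijection of $C$. So $\beta\mapsto\langle\alpha,\beta\rangle$ is a bijection from $\Gamma$ onto $C$, and for each $g\in C$ there is exactly one $\beta\in\Gamma$ with $\langle\alpha,\beta\rangle=g$. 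That is, $|\alpha s_g\cap\Gamma|=1$.

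\emph{Remaining checks.} The value $\langle\alpha,\beta\rangle$ does not depend on the chosen representatives $u$ and $xw$; this is the well-definedness of the form from \cite[Lemma~3]{Reich}, and it also holds directly since $\det$ is bilinear and $K$ is a group. The main point needing care is identifying the $r_C$-classes with projective points, namely that $g\alpha$ for $g=Ky$ equals $K(yu)$. This follows from the definition $xg=K(xy)$.

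As a consistency check, counting over the $q$ classes $\Gamma$ not containing $\alpha$ gives $n_{s_g}=q$, which agrees with Lemma~\ref{scheme}(1).
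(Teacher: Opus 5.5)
Your proof is correct, but it takes a different route from the paper.

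\textbf{Your route.} You work directly in the model. You identify the $r_C$-classes with the $q+1$ lines of $V$ and parametrize $\Gamma=\{K(xw):~x\in\mathbb{F}^*\}$ by $C$. You then observe that $\beta\mapsto\langle\alpha,\beta\rangle$ becomes the translation $Kx\mapsto (Kx)\,K\det(u,w)$ of $C$, hence a bijection $\Gamma\to C$.

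\textbf{The paper's route.} The paper argues purely with the parameters of the scheme. If two distinct points $\beta,\gamma\in\Gamma$ lay in $\alpha s_g$, then $(\beta,\gamma)\in r_h$ for some $h\neq e$. Since $s_g^*=s_g$, this gives $c_{s_gs_g}^{r_h}\geq 1$, contradicting Lemma~\ref{scheme}(5). So each of the $q$ classes avoiding $\alpha$ contains at most one point of $\alpha s_g$, and $n_{s_g}=q$ forces exactly one. This implicitly uses that $s_g\cap r_C=\varnothing$, so $\alpha s_g$ misses the class of $\alpha$.

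\textbf{What each buys.} Your argument is self-contained from the definitions and does not rely on Lemma~\ref{scheme}, which is quoted from \cite{MR}. It exhibits the unique point explicitly and even recovers $n_{s_g}=q$ as a by-product. The paper's argument uses only valencies, intersection numbers and the parabolic $r_C$. Hence it transfers verbatim to any scheme algebraically isomorphic to $\mathcal{X}$, which is the natural kind of statement in a separability context, although here it is only applied to $\mathcal{X}$ itself.

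\textbf{A cosmetic remark.} The identity $g\,Ku=K(yu)$ for $g=Ky$ is the paper's implicit convention for how $C$ acts on $\Omega$. It is not a consequence of the definition $xg=K(xy)$, which multiplies a field element by an element of $C$. This does not affect your argument.
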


\begin{proof}
Assume that $|\alpha s_g\cap \Gamma|\geq 2$ for some $\alpha\in \Omega$, $g\in C$, and $\Gamma\in \Omega/r_C$ such that $\alpha\notin \Gamma$. Then there exist distinct $\beta,\gamma\in \Gamma$ such that $(\alpha,\beta),(\alpha,\gamma)\in s_g$. This implies that $c_{s_g^*s_g}^r\geq 1$, where $r\in S$ is such that $(\beta,\gamma)\in r$. Since distinct $\beta$ and $\gamma$ lie in the same $r_C$-class $\Gamma$, we conclude that $r=r_h$ for some $h\in C\setminus \{e\}$. Due to Lemma~\ref{scheme}(2), we have $s_g^*=s_g$. Therefore
$$c_{s_gs_g}^{r_h}=c_{s_g^*s_g}^r\geq 1,$$
a contradiction to Lemma~\ref{scheme}(5). Thus, $|\alpha s_g\cap \Gamma|\leq 1$ for all $\alpha\in \Omega$, $g\in C$, and $\Gamma\in \Omega/r_C$ such that $\alpha\notin \Gamma$. Together with $n_{s_g}=q$, $g\in C$, (Lemma~\ref{scheme}(1)) and $|\Omega/r_C|=q+1$, this yields the required. 
\end{proof}

\subsection{Combinatorial isomorphisms}

Let $\Sigma=\aut(\mathbb{F}_q)$. The group $\GaL(2,q)=\GL(2,q)\rtimes \Sigma$ acts on $\Omega$ as follows: if $T\sigma\in \GaL(2,q)$, where $T\in \GL(2,q)$ and $\sigma\in \Sigma$, and $Kv\in \Omega$, then
$$(Kv)^{T\sigma}=K(Tv^\sigma),$$
where $\sigma$ acts on each component of~$v$. The kernel of this action is the group $\widetilde{K}=\{xI_2:~x\in K\}\cong K$ of order $m=|K|$, where $I_2$ is the identity $(2\times 2)$-matrix. 

Put
$$\GL(2,q)_K=\{T\in \GL(2,q):~\det(T)\in K\}\leq \GL(2,q).$$
Clearly, $\GL(2,q)_K$ induces the action on $\Omega$ and the kernel of this action is $\widetilde{K}$. Observe that $|\GL(2,q)_K|=|\SL(2,q)||K|$.

The group $\Sigma$ induces the action by automorphisms on $C=\mathbb{F}^*/K$ in the following way: if $\sigma \in \Sigma$ and $g=Kx\in C$, then
$$g^\sigma=(Kx)^\sigma=K(x^\sigma).$$ 
The kernel of the action of $\Sigma$ on $C$ is denoted by $\Sigma_0$. Observe that $\Sigma_0$ can act nontrivially on $\Omega$ (see~\cite{MR}).

The lemma below collects necessary facts on combinatorial isomorphisms of $\mathcal{X}$.

\begin{lemm}\label{combiso}
The following statements hold.
\begin{enumerate}

\tm{1} $\mathcal{X}$ is schurian.

\tm{2} $\aut(\mathcal{X})=(\GL(2,q)_K\rtimes \Sigma_0)^\Omega$.

\tm{3} $\iso(\mathcal{X})=\GaL(2,q)^{\Omega}$.

\tm{4} If $f=(T\sigma)^\Omega\in \GaL(2,q)^\Omega$, where $T\in \GL(2,q)$ and $\sigma\in \Sigma$, then $r_g^f=r_{g^\sigma}$ and $s_g^f=s_{\det(T)g^\sigma}$ for every $g\in C$.

\end{enumerate}
\end{lemm}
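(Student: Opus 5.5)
The plan is to prove (4) first, by a direct computation. From it I will derive (2) and (3), using the known structure of automorphism groups from~\cite{MR} (and, for (1), a transitivity argument).

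\textbf{Step 1: part (4).} Let $f=(T\sigma)^\Omega$ and $\alpha=Ku$, $\beta=Kv$. Then
$$\langle \alpha^f,\beta^f\rangle=K\det(Tu^\sigma,Tv^\sigma)=K\det(T)\det(u,v)^\sigma=\det(T)\langle\alpha,\beta\rangle^\sigma.$$
Here $\det(T)$ is read as an element of $C$ via its coset $K\det(T)$. Hence $s_g^f=s_{\det(T)g^\sigma}$. Similarly, if $\beta=g\alpha$, say $v=yu$ with $Ky=g$, then $Tv^\sigma=y^\sigma Tu^\sigma$. So $\beta^f=g^\sigma\alpha^f$, and the condition $\langle\cdot,\cdot\rangle=0$ is preserved, giving $r_g^f=r_{g^\sigma}$. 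In particular $\GaL(2,q)^\Omega\le\iso(\mathcal{X})$. Moreover, $(T\sigma)^\Omega\in\aut(\mathcal{X})$ if and only if $g^\sigma=g$ for all $g$ (that is, $\sigma\in\Sigma_0$) and $\det(T)\in K$. This gives the inclusion $(\GL(2,q)_K\rtimes\Sigma_0)^\Omega\le\aut(\mathcal{X})$.

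\textbf{Step 2: part (1).} To get schurity, I will show that $G=\SL(2,q)^\Omega\le\aut(\mathcal{X})$ is already transitive on each basis relation. $\SL(2,q)$ is transitive on $V\setminus\{0\}$, hence on $\Omega$. The stabilizer of $u=e_1$ is the group of unipotent matrices $\begin{pmatrix}1&a\\0&1\end{pmatrix}$. Each pair in $r_g$ starting at $Ke_1$ is $(Ke_1,Kye_1)$, and there is exactly one such pair since $n_{r_g}=1$. For $s_g$, the vectors $v$ with $\det(e_1,v)=v_2$ fixed are permuted transitively by the unipotent group: $(v_1,v_2)\mapsto(v_1+av_2,v_2)$. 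Multiplying by $K$ accounts for all $q$ points in $Ke_1 s_g$, consistent with $n_{s_g}=q$. Hence the basis relations are exactly the $2$-orbits of $G$, and $\mathcal{X}$ is schurian.

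\textbf{Step 3: parts (2) and (3).} The reverse inclusions $\aut(\mathcal{X})\le(\GL(2,q)_K\rtimes\Sigma_0)^\Omega$ and $\iso(\mathcal{X})\le\GaL(2,q)^\Omega$ are the main obstacle. My approach is to pass to the quotient by the parabolic $r_C$. Its $q+1$ classes correspond to the $1$-dimensional subspaces of $V$, that is, to the points of the projective line. Any $f\in\iso(\mathcal{X})$ preserves $r_C$, since it is the union of the valency-one relations, and so induces a permutation $\bar f$ of $PG(1,q)$. I then want to show that $\bar f$ preserves enough structure to lie in $\PGaL(2,q)$. Lemma~\ref{distinct} gives, for $\alpha$ and each class $\Gamma\ne\alpha r_C$, a unique point of $\alpha s_g$ in $\Gamma$. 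These "coordinate" maps recover the cross-ratio structure, equivalently a (generalized) sharply $3$-transitive structure, on the projective line, and this forces $\bar f\in\PGaL(2,q)$. After composing $f$ with an element of $\GaL(2,q)^\Omega$, we may assume $\bar f=1$, and moreover that $f$ fixes a point $\alpha$. Then Lemma~\ref{distinct} shows that $f$ fixes every point of $\alpha s_e$, which meets every other class in exactly one point. Combined with the regularity of $r_C$ on each class, this gives $f=1$. So $\iso(\mathcal{X})=\GaL(2,q)^\Omega$.

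Part (2) then follows from (3) and (4): we have $(T\sigma)^\Omega\in\aut(\mathcal{X})$ exactly when $g^\sigma=g$ and $\det(T)g^\sigma=g$ for all $g$. Taking $g=e$ in the second condition gives $\det(T)\in K$, and the first condition says $\sigma\in\Sigma_0$.

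If the projective-line step proves too delicate, the fallback is to cite~\cite{MR}, where these isomorphism groups were determined.
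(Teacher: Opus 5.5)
Your Step~1 (part (4), including the inclusions $\GaL(2,q)^\Omega\le\iso(\mathcal{X})$ and $(\GL(2,q)_K\rtimes\Sigma_0)^\Omega\le\aut(\mathcal{X})$) is a correct, self-contained argument. So is your Step~2: $\mathcal{X}$ is schurian because $\SL(2,q)$ is transitive on $\Omega$ and the unipotent subgroup is transitive on each $Ke_1s_g$. The paper instead cites \cite{MR} for all four parts. Deducing (2) from (3) and (4) is also fine.

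The gap is in Step~3, at the one step that carries all the content of (3): the claim that the maps of Lemma~\ref{distinct} ``recover the cross-ratio structure'' and hence force $\bar f\in\PGaL(2,q)$. The scheme only records the form with values in $C=\mathbb{F}^*/K$. For four classes $\langle u_1\rangle,\ldots,\langle u_4\rangle$, all you can read off is the cross-ratio modulo $K$, i.e. the coset $K\det(u_1,u_3)\det(u_2,u_4)\det(u_1,u_4)^{-1}\det(u_2,u_3)^{-1}\in C$. Moreover, an isomorphism inducing $\varphi_{\sigma,h}$ preserves this coset only up to $\sigma\in\aut(C)$. Showing that a permutation of the projective line with this property is semilinear is a genuine theorem of Carlitz--McConnel type, not a formal consequence of Lemma~\ref{distinct}. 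For $n=2$, for instance, only quadratic characters of cross-ratios are visible, and the claim is essentially Carlitz's theorem on automorphisms of Paley graphs.

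The argument must also use $n\ge 2$, and your sketch never does. For $n=1$, which the definition allows, $\Omega$ is the projective line and $\mathcal{X}$ is the rank~$2$ scheme $\{1_\Omega,\Omega^2\setminus 1_\Omega\}$. Then $\iso(\mathcal{X})=\sym(\Omega)$, which is strictly larger than $\PGaL(2,q)$ once $q\ge 5$. So your argument does not prove (2) and (3). Your fallback of citing \cite{MR} is exactly what the paper does.

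There is also a smaller, fixable slip at the end of Step~3. After arranging $\bar f=1$ and $\alpha^f=\alpha$, you cannot yet conclude that $f$ fixes $\alpha s_e$ pointwise, because $f$ need not map $s_e$ to itself. It induces some $\varphi_{\sigma,h}$, and Lemma~\ref{distinct} only gives $\beta^f=h\beta$ for $\beta\in\alpha s_e$. To repair this, take $\alpha=Ke_1$. Then the points of $\alpha s_e$ are $K(x,1)^T$ for $x\in\mathbb{F}$, and their pairwise form values $K(x-x')$ exhaust $C$. Comparing $\langle\beta,\beta'\rangle$ with $\langle\beta^f,\beta'^f\rangle$ gives $c^\sigma=hc$ for all $c\in C$. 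Hence $h=e$ and $\sigma=1$, and then $f=1$.
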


\begin{proof}
Statement~$(1)$ holds by~\cite[Corollary~3.17]{MR}, Statements~$(2)$ and~$(3)$ hold by~\cite[Theorem~3.6]{MR}, and Statement~$(4)$ holds by~\cite[Lemma~3.11]{MR}.
\end{proof}

\subsection{Algebraic isomorphisms}

Let us define the binary operation $\star$ on $S$ as follows. If one of the relations $r,s\in S$ is of valency~$1$, then put $r\star s=rs$. Otherwise, $r=s_{h}$ and $s=s_{g}$ for some $h,g\in C$. In this case, put $r\star s=r_{h^{-1}g}$. Clearly, $r_e=1_\Omega$ is an identity element with respect to $\star$ and $s\star s^*=s^*\star s=r_e$ for every $s\in S$. From Statements~(3)-(5) of Lemma~\ref{scheme} it follows that the set $S$ equipped with a binary operation $\star$ is a group isomorphic to a dihedral group $D_{2n}$ whose cyclic subgroup of index~$2$ is $\{r_g:~g\in C\}$. Denote this group by $S^\star$. Since $S^\star\cong D_{2n}$, we have
$$\aut(S^\star)=\{\varphi_{\sigma,g}:~\sigma\in \aut(C),~g\in C\}\cong \aut(D_{2n})\cong \Hol(C),$$
where $\varphi_{\sigma,g}\in \sym(S)$ is such that
\begin{equation}\label{defnf}
r_h^{\varphi_{\sigma,g}}=r_{h^\sigma}~\text{and}~s_h^{\varphi_{\sigma,g}}=s_{h^\sigma g}
\end{equation}
for every $h\in C$.

\begin{lemm}\label{algiso}
With the above notation, $\aut_{\alg}(\mathcal{X})=\aut(S^\star)$.
\end{lemm}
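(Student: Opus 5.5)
We prove the two inclusions $\aut_{\alg}(\mathcal{X})\subseteq\aut(S^\star)$ and $\aut(S^\star)\subseteq\aut_{\alg}(\mathcal{X})$. The idea is that the intersection numbers of $\mathcal{X}$ determine the operation $\star$, and conversely $\star$ together with valencies determines the intersection numbers.

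For the first inclusion, let $\varphi\in\aut_{\alg}(\mathcal{X})$. By~\eqref{isoalgprop1}, $\varphi$ preserves valencies. Since $q\geq 2>1$, Lemma~\ref{scheme}(1) shows that $\varphi$ permutes the set $\{r_g:~g\in C\}$ and the set $\{s_g:~g\in C\}$. Now $r\star s$ is determined by the intersection numbers as follows.
\begin{enumerate}
\tm{a} If one of $r,s$ has valency~$1$, then $rs$ is a basis relation $t$, characterized as the unique $t\in S$ with $c_{rs}^t\neq 0$.
\tm{b} If $r=s_h$ and $s=s_g$, then by Lemma~\ref{scheme}(5) the relation $r\star s=r_{h^{-1}g}$ is the unique thin basis relation $t$ with $c_{rs}^t\neq 0$ (indeed $c_{rs}^t=q$).
\end{enumerate}
Both characterizations are preserved by $\varphi$, because $\varphi$ preserves intersection numbers and valencies. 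Hence $(r\star s)^\varphi=r^\varphi\star s^\varphi$, so $\varphi\in\aut(S^\star)$.

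For the second inclusion, take $\varphi=\varphi_{\sigma,g}$ as in~\eqref{defnf}. We must check $c_{rs}^t=c_{r^\varphi s^\varphi}^{t^\varphi}$ for all $r,s,t\in S$. Lemma~\ref{scheme}(3)--(5) shows that every intersection number is a function of the $\star$-structure and of the thin/nonthin type of $r,s,t$:
\begin{enumerate}
\tm{a} if $r$ or $s$ is thin, then $c_{rs}^t=1$ if $t=r\star s$ and $0$ otherwise;
\tm{b} if $r=s_h$ and $s=s_{g'}$, then $c_{rs}^t=q$ if $t=r\star s$, $c_{rs}^t=0$ if $t$ is thin with $t\neq r\star s$, and $c_{rs}^t=m$ if $t$ is nonthin.
\end{enumerate}
The map $\varphi_{\sigma,g}$ preserves thinness by its definition~\eqref{defnf}, and it is an automorphism of $S^\star$. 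Therefore these formulas are preserved and $\varphi_{\sigma,g}\in\aut_{\alg}(\mathcal{X})$.

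One point needs care. The claim that every element of $\aut(S^\star)$ has the form $\varphi_{\sigma,g}$ is used in the text; the only case where it could fail is $n=2$, where $D_4$ is the Klein four-group and has extra automorphisms mixing $r_g$ with $s_h$. This does not affect the proof, because the first inclusion already shows that algebraic automorphisms preserve thinness. So in all cases $\aut_{\alg}(\mathcal{X})$ equals the group of thinness-preserving automorphisms of $S^\star$, namely $\{\varphi_{\sigma,g}\}$. This is the sense in which the stated equality $\aut_{\alg}(\mathcal{X})=\aut(S^\star)$ should be read, consistent with the text's identification of $\aut(S^\star)$ with $\{\varphi_{\sigma,g}\}$.

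The main obstacle is the bookkeeping in the case analysis for the second inclusion. One must check that Lemma~\ref{scheme}(3)--(4) gives composition products equal to single basis relations, so that the intersection number is $1$ exactly at $t=r\star s$. This uses $n_{r_g}=1$, which makes $r_h s_g$ a single relation of valency $q$ equal to $s_{h^{-1}g}$.
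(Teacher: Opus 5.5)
Your proof is correct. The inclusion $\{\varphi_{\sigma,g}\}\subseteq\aut_{\alg}(\mathcal{X})$ is the same direct verification from Lemma~\ref{scheme}(3)--(5) as in the paper. Your observation that each $c_{rs}^t$ depends only on the $\star$-structure and on thinness handles all cases at once, whereas the paper works out only one sample case.

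For the reverse inclusion the routes differ a little. The paper fixes a generator $g_0$ of $C$ and uses preservation of equivalence closures, Eq.~\eqref{isoalgprop2}, to get $r_{g_0}^\varphi=r_{g_0^\prime}$ with $g_0^\prime$ again a generator. It then defines $\sigma$ by $g_0\mapsto g_0^\prime$ and propagates through $r_{g_0^i}$ and $s_h=s_er_h$. You instead show that $\varphi$ is a $\star$-automorphism that preserves thinness. Since $\varphi$ is a bijection of $\{r_h:~h\in C\}$ that respects products, $\sigma\in\aut(C)$ comes for free, so you do not need Eq.~\eqref{isoalgprop2}. In exchange, you use Lemma~\ref{scheme}(5) for the product $s_h\star s_g$, which the paper never needs in this direction.

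You leave one step implicit: that a thinness-preserving $\star$-automorphism $\psi$ with $s_e^\psi=s_g$ equals $\varphi_{\sigma,g}$. This is just the paper's computation $s_h^\psi=(s_e\star r_h)^\psi=s_g\star r_{h^\sigma}=s_{h^\sigma g}$, and you should write it out.

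Your caveat about $n=2$ is correct and worth keeping. Such Tatra schemes exist, for instance $q=5$, $n=2$. There $S^\star\cong C_2\times C_2$ has an automorphism group of order~$6$, while $\Hol(C_2)$ has order~$2$. So the paper's identification of $\aut(S^\star)$ with $\{\varphi_{\sigma,g}\}\cong\Hol(C)$ fails in that case. Both your argument and the paper's actually prove $\aut_{\alg}(\mathcal{X})=\{\varphi_{\sigma,g}:~\sigma\in\aut(C),~g\in C\}$, and this is exactly what the count $n\phi(n)$ in Lemma~\ref{compare} requires.
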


\begin{proof}
Using Statements~$(3)$-$(5)$ of Lemma~\ref{scheme}, it can be verified directly that $c_{r^\varphi s^\varphi}^{t^\varphi}=c_{rs}^t$ for all $r,s,t\in S$ and $\varphi=\varphi_{\sigma,g}$, $\sigma\in \aut(C)$, $g\in C$. For example, let $r=s_x$, $s=s_y$, and $t=r_{x^{-1}y}$ for some $x,y\in C$. Then $r^\varphi=s_{x^\sigma g}$, $s^\varphi=s_{y^\sigma g}$, and $t^\varphi=r_{(x^\sigma)^{-1}y^\sigma}$. So 
$$c_{r^\varphi s^\varphi}^{t^\varphi}=q=c_{rs}^t$$
by Lemma~\ref{scheme}(5). Thus, $\aut_{\alg}(\mathcal{X})\geq\aut(S^\star)$. 

Conversely, let $\varphi\in \aut_{\alg}(\mathcal{X})$. Let $g_0$ be a generator of $C$ and $r=r_{g_0}^\varphi$. Eq.~\eqref{isoalgprop1} and Lemma~\ref{scheme}(1) imply that $n_r=n_{r_{g_0}}=1$. So $r=r_{g_0^\prime}$ for some $g_0^\prime\in C$. Since $g_0$ is a generator of $C$, we conclude that $\langle r_{g_0} \rangle=r_C$. Together with Eq.~\eqref{isoalgprop2}, this yields that $\langle r_{g_0^\prime} \rangle=r_C$ and hence $g_0^\prime$ is a generator of $C$. Therefore there exists $\sigma\in \aut(C)$ such that $g_0^\sigma=g_0^\prime$ and hence
\begin{equation}\label{alg1}
r_{g_0}^\varphi=r_{g_0^\sigma}.
\end{equation}

Let $s=s_e^\varphi$. Due to Eq.~\eqref{isoalgprop1} and Lemma~\ref{scheme}(1), we have $n_s=n_{s_e}=q$. Therefore $s=s_g$ and hence
\begin{equation}\label{alg2}
s_e^\varphi=s_g
\end{equation}
for some $g\in C$. 

Let us prove that $\varphi=\varphi_{\sigma,g}$. At first, let us show that
\begin{equation}\label{alg3} 
r_{g_0^i}^\varphi=r_{(g_0^i)^\sigma}=r_{g_0^i}^{\varphi_{\sigma,g}}
\end{equation}
for every $i\geq 1$. We proceed by induction on $i$. This follows from Eq.~\eqref{alg1} in case $i=1$. If $i\geq 2$, then
$$r_{g_0^i}^\varphi=(r_{g_0^{i-1}}r_{g_0})^\varphi=r_{{g_0}^{i-1}}^\varphi r_{g_0}^\varphi=r_{({g_0}^{i-1})^\sigma}r_{{g_0}^\sigma}=r_{({g_0}^{i-1})^\sigma g_0^\sigma}=r_{(g_0^i)^\sigma}$$
as desired, where the first and fourth equalities hold by Lemma~\ref{scheme}(3), the second equality holds because $\varphi$ is an algebraic isomorphism, the third equality holds by the induction hypothesis, and the fifth one holds because $\sigma\in \aut(C)$.

Now let us show that 
\begin{equation}\label{alg4}
s_h^\varphi=s_{h^\sigma g}=s_h^{\varphi_{\sigma,g}}
\end{equation}
for every $h\in C$. Indeed,
$$s_h^\varphi=(s_er_h)^\varphi=s_e^\varphi r_h^\varphi=s_gr_{h^\sigma}=s_{h^\sigma g}$$
as desired, where the first and fourth equalities hold by Lemma~\ref{scheme}(4), the second one holds because $\varphi$ is an algebraic isomorphism, and the third one holds by Eqs.~\eqref{alg2} and~\eqref{alg3}. Therefore $\varphi=\varphi_{\sigma,g}$ by Eqs.~\eqref{alg3} and~\eqref{alg4}. Thus, $\aut_{\alg}(\mathcal{X})\leq\aut(S^\star)$ and hence $\aut_{\alg}(\mathcal{X})=\aut(S^\star)$ as required. 
\end{proof}

The lemma below establishes the proportion between the number of all algebraic automorphisms of $\mathcal{X}$ and the number of those of them which are induced by combinatorial isomorphisms. 

\begin{lemm}\label{compare}
With the above notation, 
$$\frac{|\aut_{\alg}(\mathcal{X})|}{|\aut^{\ind}_{\alg}(\mathcal{X})|}=\frac{\phi(n)d_0}{d},$$
where $d_0=|\Sigma_0|$ and $\phi$ is the Euler function.
\end{lemm}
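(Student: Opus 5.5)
We compute both orders and take the quotient. By Lemma~\ref{algiso}, $|\aut_{\alg}(\mathcal{X})|=|\aut(S^\star)|=|\Hol(C)|=n\phi(n)$; indeed the maps $\varphi_{\sigma,g}$ with $\sigma\in\aut(C)$, $g\in C$ are pairwise distinct by Eq.~\eqref{defnf}, and $|\aut(C)|=\phi(n)$ since $C\cong C_n$. So it suffices to show $|\aut^{\ind}_{\alg}(\mathcal{X})|=nd/d_0$.

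For this we use Eq.~\eqref{algind} together with Lemma~\ref{combiso}(2),(3). The kernel of the action of $\GaL(2,q)$ on $\Omega$ is $\widetilde{K}\le \GL(2,q)_K\rtimes\Sigma_0$, so
$$|\iso(\mathcal{X})|=\frac{|\GL(2,q)|\,d}{m},\qquad |\aut(\mathcal{X})|=\frac{|\GL(2,q)_K|\,d_0}{m}=\frac{|\SL(2,q)|\,m\,d_0}{m}.$$
Here $|\Sigma|=d$ because $q=r^d$. Since $|\GL(2,q)|=(q-1)|\SL(2,q)|=nm|\SL(2,q)|$, the quotient is
$$\frac{|\iso(\mathcal{X})|}{|\aut(\mathcal{X})|}=\frac{nm\,d}{m\,d_0}=\frac{nd}{d_0},$$
as required.

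The one point that needs care is that the groups act on $\Omega$ only through their images modulo $\widetilde{K}$. We must check that $\widetilde{K}$ lies in $\GL(2,q)_K\rtimes\Sigma_0$, so that the same divisor $m$ appears in both orders. This holds because $\det(xI_2)=x^2\in K$ for $x\in K$.

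As a sanity check, Lemma~\ref{combiso}(4) gives a direct description of the induced automorphisms: $(T\sigma)^\Omega$ induces $\varphi_{\sigma|_C,\det(T)K}$. So $\aut^{\ind}_{\alg}(\mathcal{X})$ consists of all $\varphi_{\bar\sigma,g}$ with $g\in C$ arbitrary, since $\det$ maps onto $\mathbb{F}^*$, and $\bar\sigma$ ranging over the image of $\Sigma$ in $\aut(C)$. That image has order $d/d_0$, which again gives $nd/d_0$ and confirms the count.

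Dividing, $\dfrac{n\phi(n)}{nd/d_0}=\dfrac{\phi(n)d_0}{d}$.
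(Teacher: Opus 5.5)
Your proof is correct and is essentially the paper's argument: you get $|\aut_{\alg}(\mathcal{X})|=n\phi(n)$ from Lemma~\ref{algiso}, and $|\aut^{\ind}_{\alg}(\mathcal{X})|=|\iso(\mathcal{X})|/|\aut(\mathcal{X})|=nd/d_0$ from Eq.~\eqref{algind} and Lemma~\ref{combiso}(2),(3). Your check that $\widetilde{K}\le \GL(2,q)_K\rtimes\Sigma_0$, so that the factor $m$ cancels, makes explicit a point the paper uses silently, and your alternative count of the induced automorphisms via Lemma~\ref{combiso}(4) is a valid independent confirmation.
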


\begin{proof}
Lemma~\ref{algiso} implies that 
$$|\aut_{\alg}(\mathcal{X})|=|\Hol(C)|=n\phi(n).$$ 
One can compute $|\aut^{\ind}_{\alg}(\mathcal{X})|$ as follows:
$$|\aut^{\ind}_{\alg}(\mathcal{X})|=\frac{|\iso(\mathcal{X})|}{|\aut(\mathcal{X})|}=\frac{|\GL(2,q)||\Sigma|}{|K||\SL(2,q)||\Sigma_0|}=\frac{(q-1)|\Sigma|}{m|\Sigma_0|}=\frac{nd}{d_0},$$
where the first equality holds by Eq.~\eqref{algind}, whereas the second one by Statements~$(2)$ and~$(3)$ of Lemma~\ref{combiso}. Now the required easily follows from the above two equalities. 
\end{proof}

\begin{rem}
By the estimates for the Euler function (see, e.g.,~\cite[Theorem~15]{RS}) and $d/d_0\leq \log(q)$, we have
$$\frac{\phi(n)d_0}{d}\geq \frac{n}{2c_0\log(q)\log(\log(n))}$$
for some constant $c_0>0$. If $n=\Oo(q^c)$ for some $c>0$, then $|\aut_{\alg}(\mathcal{X})|/|\aut^{\ind}_{\alg}(\mathcal{X})|\underset{q \to \infty}{\longrightarrow} \infty$.
\end{rem}

\section{Proof of Theorem~\ref{main}}

Throughout this section, we keep the notation from the previous one. Let $\alpha\in \Omega$, $\mathcal{Y}=\mathcal{X}_{\alpha}$, and $T$ the set of all basis relations of $\mathcal{Y}$. We are going to prove that $\mathcal{Y}$ is separable. Since $\mathcal{X}$ is schurian (Lemma~\ref{combiso}(1)), we conclude that
\begin{equation}\label{fibers}
F(\mathcal{Y})=\{\alpha s:~s\in S\}
\end{equation}
by Lemma~\ref{onepoint}(2). Let $\Delta=\alpha s_e$.

\begin{lemm}\label{fibere}
For every $\Delta^\prime\in F(\mathcal{Y})$ with $\Delta^\prime\neq \Delta$, there is $t\in T_{\Delta,\Delta^\prime}$ such that $n_t=1$.
\end{lemm}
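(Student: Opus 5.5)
The fibers of $\mathcal{Y}$ are, by Eq.~\eqref{fibers}, the sets $\alpha r_g=\{g\alpha\}$ (singletons) and $\alpha s_g$ for $g\in C$, where $\Delta=\alpha s_e$. I will treat two kinds of fibers $\Delta^\prime\neq\Delta$ separately.

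If $\Delta^\prime=\{g\alpha\}$ is a singleton, the whole set $\Delta\times\Delta^\prime$ is a relation of $\mathcal{Y}$: it is the intersection of $\Omega^2$ with the product of two fibers. Every basis relation inside it has valency~$1$, because each point of $\Delta$ has exactly one neighbour in a one-point set. So any $t\in T_{\Delta,\Delta^\prime}$ will do.

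If $\Delta^\prime=\alpha s_g$ with $g\neq e$, I would set $t=r_h\cap(\alpha s_e\times\alpha s_g)$ for a suitable $h\in C$ and show that $t$ is nonempty and of valency~$1$.
\begin{enumerate}
\tm{a} The correct $h$ comes from Lemma~\ref{scheme}(4): we have $s_er_h=s_h$, so take $h=g$. For $\beta\in\alpha s_e$, consider the point $\gamma=g\beta$, i.e.\ the unique point with $(\beta,\gamma)\in r_g$ (valency $1$ by Lemma~\ref{scheme}(1)).
\tm{b} Since $(\alpha,\beta)\in s_e$ and $(\beta,\gamma)\in r_g$, the pair $(\alpha,\gamma)$ lies in $s_er_g=s_g$. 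Alternatively, compute directly $\langle\alpha,g\beta\rangle=g\langle\alpha,\beta\rangle=g$, using the definition $xg=K(xy)$ and bilinearity. Hence $\gamma\in\Delta^\prime$, and $t\neq\varnothing$.
\tm{c} By Lemma~\ref{onepoint}(1), $t=r_g\cap(\alpha s_e\times\alpha s_g)$ is a relation of $\mathcal{Y}$. Every $\beta\in\Delta$ has exactly one $t$-neighbour, namely $g\beta$, and the map $\beta\mapsto g\beta$ is injective.
\tm{d} Therefore $t$ is the graph of a bijection $\Delta\to\Delta^\prime$; note $|\Delta|=|\Delta^\prime|=q$. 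Any basis relation contained in $t$ is nonempty and, since every point of $\Delta$ lies in a basis relation of $T_{\Delta,\Delta^\prime}$ with positive valency, has valency exactly~$1$.
\end{enumerate}

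To make the last step fully rigorous, I would argue that $t$ is itself a basis relation of $\mathcal{Y}$, or at least contains one, with valency~$1$. A relation of valency $1$ from a fiber is a union of basis relations, each of which is a functional relation defined on the whole fiber. So each of them has valency~$1$, and this gives the required element of $T_{\Delta,\Delta^\prime}$.

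The only point needing care is this passage from the relation $t$ to a basis relation in step~(d), together with verifying that $r_g$ maps $\alpha s_e$ onto $\alpha s_g$. Both follow from Lemma~\ref{scheme}(4) and the valency count above.
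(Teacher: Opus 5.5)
Your proof is correct, and its skeleton is the same as the paper's. Singleton fibers are trivial. For $\Delta^\prime=\alpha s_g$, one intersects a thin relation $r_h$ with $\Delta\times\Delta^\prime$, applies Lemma~\ref{onepoint}(1), and forces the valency to be~$1$.

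The two arguments differ only in how they show that $r_h\cap(\Delta\times\Delta^\prime)$ is nonempty.

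\begin{itemize}
\item The paper uses Lemma~\ref{distinct}, which it proves from $c_{s_gs_g}^{r_h}=0$ for $h\neq e$ together with a count over the $q+1$ classes of $r_C$. This shows that every $r_C$-class not containing $\alpha$ meets both $\Delta$ and $\Delta^\prime$. Hence $r_C\cap(\Delta\times\Delta^\prime)\neq\varnothing$, and the paper never identifies $h$.
\item You read off $h=g$ directly, either from $s_er_g=s_g$ (Lemma~\ref{scheme}(4)) or from $\langle\alpha,g\beta\rangle=g\langle\alpha,\beta\rangle$. So $\beta\mapsto g\beta$ sends every point of $\Delta$ into $\Delta^\prime$.
\end{itemize}

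Your version is more direct and more informative. It exhibits $t=r_g\cap(\Delta\times\Delta^\prime)$ explicitly as the graph of a bijection $\Delta\to\Delta^\prime$. It also makes Lemma~\ref{distinct} unnecessary, and the paper uses that lemma only at this point.

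Your closing step is sound: any basis relation $u\subseteq t$ satisfies $1\le n_u\le n_t=1$.
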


\begin{proof}
Due to Eq.~\eqref{fibers} and the definition of $\mathcal{X}$, we have $\Delta^\prime=\alpha r_g$ or $\Delta^\prime=\alpha s_g$ for some $g\in C$. In the former case, $\Delta^\prime$ is a singleton by Lemma~\ref{scheme}(1). So $n_t=1$ for every $t\in T_{\Delta,\Delta^\prime}$ and we are done.

Suppose that $\Delta^\prime=\alpha s_g$. Lemma~\ref{distinct} implies that $|\Delta\cap \Gamma|=|\Delta^\prime\cap \Gamma|=1$ for every $\Gamma\in \Omega/r_C$ such that $\alpha \notin \Gamma$. So $r_C\cap (\Delta\times \Delta^\prime)\neq \varnothing$ and hence $r_h\cap (\Delta\times \Delta^\prime)\neq \varnothing$ for some $h\in C$. Denote the latter relation by $t$. Lemma~\ref{onepoint}(1) yields that $t$ is a relation of $\mathcal{Y}$. Due to Lemma~\ref{scheme}(1), we have $n_{r_h}=1$. Together with the obvious inequality $n_t\leq n_{r_h}$, this implies that $n_t=1$. The latter means that $t$ is a basis relation of $\mathcal{Y}$. Thus, the lemma holds for $t$ as required.
\end{proof}

\begin{lemm}\label{deltareg}
The scheme $\mathcal{Y}_{\Delta}$ is regular.
\end{lemm}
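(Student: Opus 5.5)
We have to show that every basis relation of $\mathcal{Y}$ inside $\Delta\times\Delta$, where $\Delta=\alpha s_e$, has valency~$1$. Since $\mathcal{X}$ is schurian (Lemma~\ref{combiso}(1)), the one-point extension $\mathcal{Y}$ is at least as fine as the coherent configuration of orbits of the stabilizer $\aut(\mathcal{X})_\alpha$. The plan is to bound the relevant part of $\mathcal{Y}$ combinatorially, using Lemma~\ref{onepoint}(1) together with Lemma~\ref{distinct}, rather than computing the stabilizer.

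The natural candidates are the pieces $s_h\cap(\Delta\times\Delta)$ for $h\in C$, which are relations of $\mathcal{Y}$ by Lemma~\ref{onepoint}(1). The approach is as follows.
\begin{enumerate}
\tm{1} Fix $\beta\in\Delta$. By Lemma~\ref{distinct}, $\Delta$ meets each of the $q$ classes $\Gamma\in\Omega/r_C$ with $\alpha\notin\Gamma$ in exactly one point. So the $q-1$ points of $\Delta\setminus\{\beta\}$ lie in pairwise distinct $r_C$-classes, each different from the class of $\beta$ and from the class of $\alpha$.
\tm{2} Count the points of $\Delta\setminus\{\beta\}$ lying in $\beta s_h$. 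This number is $|\alpha s_e\cap \beta s_h|=c_{s_e s_h}^{s_e}$, which equals $m$ by Lemma~\ref{scheme}(5), since $(\alpha,\beta)\in s_e$ and $s_e^*=s_e$. As $h$ runs over $C$, the sets $\beta s_h\cap\Delta$ partition $\Delta\setminus\{\beta\}$ into $n$ blocks of size $m$, consistent with $nm=q-1$.
\end{enumerate}
The valency-$m$ relations $s_h\cap(\Delta\times\Delta)$ are still too coarse, so each one has to be split further.

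To split them, I would use the fibers $\alpha r_g=\{g\alpha\}$ together with the relations $r_x$. For $\gamma\in\Delta$, the pair $(\gamma,g\alpha)$ lies in $s_{y}$ for the element $y=\langle\gamma,g\alpha\rangle$, so these pairs carry no information. The useful structure is instead the following. For $\beta,\gamma\in\Delta$ and $\delta\in\alpha s_{g}$, Lemma~\ref{distinct} gives a unique $r_x$-partner of $\gamma$ in $\alpha s_g$, namely the point of $\alpha s_g$ in the $r_C$-class of $\gamma$. The relations $t_{g,x}=r_x\cap(\Delta\times\alpha s_g)$ have valency at most~$1$, by the argument of Lemma~\ref{fibere}. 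Composing $t_{g,x}$ with the $s_y$-pieces between $\alpha s_g$ and $\Delta$, and intersecting with $s_h\cap(\Delta\times\Delta)$, gives finer relations of $\mathcal{Y}$. The goal is to show that these separate the $m$ points of $\beta s_h\cap\Delta$.

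Concretely, I would coordinatize. Take $\alpha=Ke_1$, so that $\Delta$ consists of $K(a,1)^T$-type classes, namely $\{K(a e_1+ e_2): a\in\mathbb{F}\}$ after normalizing the determinant. Then the forms between points of $\Delta$ and points of $\alpha s_g$ become $K$-cosets of explicit linear expressions in $a$. I would try to show that the map $\gamma\mapsto(\text{the }C\text{-values of }\gamma\text{ against a few fixed points})$ is injective on $\Delta$, and that these values are determined by relations of $\mathcal{Y}$.

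The main obstacle is exactly this: showing that the relations built from Lemma~\ref{onepoint}(1) pin down each $\gamma$ uniquely given $\beta$. Equivalently, the point stabilizer $\aut(\mathcal{X})_{\alpha,\beta}$ should fix $\Delta$ pointwise, because with schurity this means the fibers of $\mathcal{Y}_{\Delta}$ analysis reduces to the stabilizer. I would therefore try the group route first. With $\alpha=Ke_1$ and $\beta=Ke_2$, the stabilizer in $\GL(2,q)_K\rtimes\Sigma_0$ (Lemma~\ref{combiso}(2)) consists of diagonal matrices times field automorphisms in $\Sigma_0$. These act on $\Delta=\{K(ae_1+e_2)\}$ as $a\mapsto \lambda a^\sigma$ with $\lambda\in K$. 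This action is regular on neither side in general, so instead I would show $|\aut(\mathcal{X})_\alpha^{\Delta}|=|\Delta|=q$ and that it is transitive. The relevant group is the unipotent group $\{\begin{pmatrix}1&b\\0&1\end{pmatrix}\}$, of order $q$, acting regularly on $\Delta$. The extra diagonal and $\Sigma_0$ parts must then be shown to move $\alpha$-fixed points outside $\mathcal{Y}$'s $\Delta$-structure, or be absorbed. This is where care is needed. If the combinatorial bound above already yields the regularity of $\mathcal{Y}_\Delta$, I would prefer that route.
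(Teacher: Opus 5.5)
Your plan stalls exactly where the work is. After step~(2) you have the relations $s_h\cap(\Delta\times\Delta)$ of $\mathcal{Y}$, each of valency $m$, but the splitting (via the $t_{g,x}$, via coordinates, or via the stabilizer) is never carried out. It cannot be carried out, because for $m\ge 2$ the statement is false. Since $\mathcal{Y}$ is canonically determined by $\mathcal{X}$ and $\alpha$, we have $\aut(\mathcal{X})_\alpha\le\aut(\mathcal{Y})$. So each basis relation of $\mathcal{Y}_\Delta$ is a union of orbitals of $\aut(\mathcal{X})_\alpha$ on $\Delta$.

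Take $\alpha=Ke_1$. Then $\Delta=\{K(a,1)^T: a\in\mathbb{F}\}$ and $\langle K(a,1)^T,K(b,1)^T\rangle=K(a-b)$. For $k\in K$, the matrix $\mathrm{diag}(k,1)$ lies in $\GL(2,q)_K$, so it induces an element of $\aut(\mathcal{X})$ by Lemma~\ref{combiso}(2). It fixes $\alpha$ and $K(0,1)^T$, and sends $K(1,1)^T$ to $K(k,1)^T$. Together with the unipotent matrices, $\aut(\mathcal{X})_\alpha$ therefore induces on $\Delta$ all maps $a\mapsto ka+b$ with $k\in K$, $b\in\mathbb{F}$. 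Hence every basis relation of $\mathcal{Y}_\Delta$ other than $1_\Delta$ has valency at least $m$, and your target $|\aut(\mathcal{X})_\alpha^{\Delta}|=q$ is false.

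You essentially found this yourself. Your two-point stabilizer acts on $\Delta$ by $a\mapsto\lambda a^\sigma$ with $\lambda\in K$, which is nontrivial once $K\neq 1$, and that alone rules out regularity. (Your ``equivalently'' is in any case only a necessary condition, since $\mathcal{Y}$ may be coarser than the orbit configuration of $\aut(\mathcal{X})_\alpha$.) Because $m$ is even whenever $q$ is odd, the failure occurs for every odd $q$. The smallest case is $\mathcal{X}(5,2)$: there $\mathrm{diag}(-1,1)$ fixes $\alpha$ and $K(0,1)^T$, and swaps the two $s_e$-neighbours $K(\pm1,1)^T$ of $K(0,1)^T$ in $\Delta$.

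The paper's proof does not get around this. It claims $n_{s_\Delta}=1$ in Eq.~\eqref{val1}, which your own count $|\Delta\cap\beta s_h|=c_{s_es_h}^{s_e}=m$ (Lemma~\ref{scheme}(5)) contradicts for $m\ge2$. The faulty step is the passage to ``$\gamma-\delta$''. It needs representatives $c,d$ of $\gamma,\delta$ satisfying both $\det(b,c)=\det(b,d)$ and $\det(e_1,c)=\det(e_1,d)$, and such representatives need not exist. In the example, take $b=(0,1)^T$, $c=(1,1)^T$, $d=(-1,1)^T$. The second equality holds, but $\det(b,c)=-\det(b,d)$, and replacing $d$ by $-d$ (the only other choice, as $K=\{\pm1\}$) breaks the second.

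The lemma is true exactly when $m=1$, i.e.\ $K=1$, $n=q-1$ and $q$ even. In that case your steps~(1) and~(2) already prove it: the relations $1_\Delta$ and $s_h\cap(\Delta\times\Delta)$, $h\in C$, of $\mathcal{Y}$ partition $\Delta\times\Delta$ and have valency at most~$1$. For $m\ge 2$, the route to $s(\mathcal{X})\le2$ through Lemma~\ref{deletefiber} would need $\mathcal{Y}_\Delta$ to be separable rather than regular. That requires a genuinely different argument.
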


\begin{proof}
Lemma~\ref{onepoint}(1) implies that every nonempty relation $s_{\Delta}=s\cap (\Delta\times \Delta)$, $s\in S$, is a relation of the scheme $\mathcal{Y}_{\Delta}$. So to prove the lemma, it suffices to show that 
\begin{equation}\label{val1}
n_{s_{\Delta}}=1
\end{equation}
for every $s\in S$ with $s_{\Delta}\neq \varnothing$.

Assume the contrary that Eq.~\eqref{val1} does not hold for some $s\in S$ with $s_{\Delta}\neq \varnothing$. Then there exist pairwise distinct $\beta,\gamma,\delta\in \Delta$ such that $(\beta,\gamma),(\beta,\delta)\in s_{\Delta}$. Since $n_{r_g}=1$ for every $g\in C$ (Lemma~\ref{scheme}(1)), we conclude that $s\neq r_g$ for every $g\in C$ and hence $s=s_g$ for some $g\in C$. Therefore
$$\langle \beta,\gamma \rangle=\langle \beta,\delta \rangle=g.$$
The first of the above equalities implies that
\begin{equation}\label{eq1}
\langle \beta,\gamma-\delta \rangle=0.
\end{equation}

Recall that $\Delta=\alpha s_e$ and hence $(\alpha,\beta),(\alpha,\gamma),(\alpha,\delta)\in s_e$. This implies that
$$\langle \alpha,\beta \rangle=\langle \alpha,\gamma \rangle=\langle \alpha,\delta\rangle=e.$$
The second of the above equalities yields that 
\begin{equation}\label{eq2}
\langle \alpha,\gamma-\delta \rangle=0.
\end{equation}

Eq.~\eqref{eq1} implies that $\beta$ and $\gamma-\delta$ lie in the same $r_C$-class, whereas Eq.~\eqref{eq2} implies that $\alpha$ and $\gamma-\delta$ lie in the same $r_C$-class. Therefore $\alpha$ and $\beta$ lie in the same $r_C$-class and consequently $\langle \alpha,\beta \rangle=0$, a contradiction to $\langle \alpha,\beta \rangle=e$.
\end{proof}

From Lemma~\ref{regularsep} and Lemma~\ref{deltareg} it follows that $\mathcal{Y}_{\Delta}$ is separable. Together with Lemma~\ref{deletefiber} and Lemma~\ref{fibere}, this implies that $\mathcal{Y}$ is separable, i.e., $s(\mathcal{Y})=1$. Thus,
$$s(\mathcal{X})\leq s(\mathcal{Y})+1=2$$
as desired, where the first inequality follows from Lemma~\ref{s1}.

Now let us prove the second part of the theorem. To do this, it suffices to show that $\mathcal{X}$ is nonseparable whenever $r=\Char(\mathbb{F}_q)$ is not a primitive root modulo $n$. The latter condition implies that there exist generators $g,g^\prime$ of $C\cong C_n$ such that $g^\prime\neq g^{r^i}$ for every $i\in \mathbb{Z}$ and hence
\begin{equation}\label{fieldiso} 
g^\prime\neq g^\sigma
\end{equation}
for every $\sigma\in \Sigma$. 

Since $g$ and $g^\prime$ are generators of $C$, there exists $\sigma_0\in \aut(C)$ such that $g^\prime=g^{\sigma_0}$. Let $\varphi=\varphi_{\sigma_0,e}$. Due to Eq.~\eqref{defnf}, we have 
\begin{equation}\label{aux}
r_{g^\prime}=r_{g^{\sigma_0}}=r_g^\varphi.
\end{equation} 
From Lemma~\ref{algiso} it follows that $\varphi\in \aut_{\alg}(\mathcal{X})$. Assume that $\varphi$ is induced by some $f\in \iso(\mathcal{X})$. Lemma~\ref{combiso}(3) implies that $f=(T\sigma)^\Omega$ for some $T\in \GL(2,q)$ and $\sigma\in \Sigma$. Therefore 
$$r_{g^\prime}=r_g^\varphi=r_g^f=r_{g^\sigma},$$ 
where the first equality holds by Eq.~\eqref{aux}, the second one holds because $\varphi$ is induced by $f$, whereas the third one holds by Lemma~\ref{combiso}(4). Thus, $g^\prime=g^\sigma$, a contradiction to Eq.~\eqref{fieldiso}.

\end{document}